\newcommand{\RR}{\mathbb{R}}
\newcommand{\NN}{\mathbb{N}}
\newcommand{\xx}{\mathbf{x}}
\newcommand{\uu}{\mathbf{u}}
\newcommand{\eps}{\varepsilon}
\newcommand{\const}{\mathrm{const}}
\newcommand{\Tra}[1]{\mathcal{T}_{#1}}
\newcommand{\norm}[1]{\langle #1\rangle} 
\newcommand{\GDM}{GDM}
\newcommand{\beq}[1]{\begin{equation}\label{#1}}
\newcommand{\eeq}{\end{equation}}
\newcommand{\ba}[1]{\begin{array}{#1}}
\newcommand{\ea}{\end{array}}
\newcommand{\mat}[2]{\left[\ba{#1}#2\ea\right]}
\newcommand{\dst}{\displaystyle}
\newcommand{\myname}{serge.sadov} 
\newtheorem{theorem}{Theorem}
\newtheorem{prop}{Proposition}
\newtheorem{lemma}{Lemma}
\title{On Shallit's minimization problem}
\author{Sergey Sadov
\footnote{E-mail: {\myname}@gmail.com}}
\begin{document}
\maketitle

\begin{abstract}
We revisit J.~Shallit's minimization problem from 1994 SIAM Review concerning a two-term asymptotics of the minimum of a certain rational 
sum involving variables and products of their reciprocals, the number of variables being the large parameter.
Properties previously known numerically, most importantly, the existence of the constant in the asymptotics, are proved.
We supply a sharp remainder estimate to the originally proposed asymptiotic formula. 
The proofs are based on the analysis of trajectories of a planar discrete dynamical system that determines the point of minimum. 

Keywords: analytical inequalities, Shallit's constant, 
hyperbolic point, local linearization, convergence rate.

MSC primary: 
26D15,  
secondary:
37N40,  
41A25.  
\end{abstract}

\section{Introduction}

The following minimization problem was proposed in \cite{Shallit_1994}. 

\medskip
{\it
Let $\xx=(x_1,\dots,x_n)$ be a vector with positive components.
Denote
$$
 f_n(\xx)=\sum_{i=1}^n x_i+\sum_{1\le i\le j\le n} \prod_{k=i}^j\frac{1}{x_k}.
$$
Show that there exists a positive constant $C$ such that
\begin{equation}
\label{ShAsym}
 \min_{\xx>0} f_n(\xx)=3n-C+o(1) 
 \qquad (n\to\infty).
\end{equation}
}

Hereinafter we write $\xx>0$ or $\xx\in\RR_+^n$ if $x_i>0$ for $i=1,\dots,n$.

The constant $C=1.3694514039\dots$ is nowadays known as Shallit's constant \cite[Sec.\ 3.1]{Finch2003}. 

Denote
\begin{equation}
\label{An}
 A_n=\inf_{\xx>0} f_n(\xx).
\end{equation}
and
\begin{equation}
\label{Cn}
 C_n=3n-A_n.
\end{equation}
The publication \cite{GDM_1995} in the Solutions section of SIAM Review, to which we refer in the sequel as \GDM, 
does not, as a matter of fact, answer the original question proper. It addresses the method of computation of the constant $C$
and contains a series of observations,  supported by a persuasive numerical evidence, about the sequence $(C_n)$ and certain auxiliary sequences. 
However the convergence is not proved. The goal of this paper is to provide the proof and to justifiy analytically the various claims made in \GDM. 
We are able to replace the $o(1)$ remainder in \eqref{ShAsym} by $\Theta(\rho^{-n})$, where $\rho=2+\sqrt{3}$.
The notation $f=\Theta(g)$ or $f\asymp g$ is used as a shortcut to the two-sided inequality $m|f|\leq |g|\leq M|g|$ with
some constants $0<m<M$. 

In Section~\ref{sec:prelim}, following \GDM\ and just being more explicit about the mechanism underlying the asymptotics in question, we 
bring to the forefront the two-dimensional dynamical system 
whose special trajectories correspond to the points of minimum in Shallit's problem for different values of $n$. 
A sequence of short lemmas covering various properties of the trajectories, mostly known numerically from \GDM,
comprises Section~\ref{sec:trajan}. This preparation makes it easy to prove convergence of the sequence $(C_n)$ to a finite limit (Section~\ref{sec:asmin},
Theorem~\ref{thm:Sh-lim}).
In a somewhat more difficult 
Section~\ref{sec:convrates} we study the precise rates of convergence of $(C_n)$ and other relevant sequences. 
Note that part (c) of Theorem~\ref{thm:convrates} is a refinement of Theorem~\ref{thm:Sh-lim}.

The dynamical system considered here is area-preserving and can be described in terms of the least action principle for an appropriate Lagrange's function.
A study of its global behaviour may be of interest in its own right.  We make some remarks to that effect in the short Section~\ref{sec:remarks}.

Appendix
contains the numerical values of the constants $C$ and $p_0^*$ (defined in Eq.~\ref{limits-pu}) to the accuracy of 400 digits,
which can help those readers who wish to play with numbers.
 
In \cite{Finch2003} Shallit's constant neighbors the Shapiro-Drinfeld constant, which is related
to the problem I studied in \cite{Sadov_2016}. This combination 
is not altogether incidental, but the connection is too indirect to attempt to describe it precisely.

For the reader's convenience the notation of \GDM\ is 
used where applicable and new notation 
 is chosen so as  not to interfere with that of \GDM. 
There is one exception: we denote by $\lambda_j^*$ what GDM denote $\lambda_j$
 (see Section~\ref{ssec:pu-limits}, Remark 2).

\section{Preliminaries}
\label{sec:prelim}

\subsection{Reduction and critical point equations} 
\label{ssec:reduction}
Following \GDM, recall a reduction of the minimization problem to the analysis of solution of a system of algebraic equations. The substitution
\beq{chvar}
 x_1=\frac{1}{u_1}; \qquad x_j=\frac{1+u_{j-1}}{u_j}, \quad j=2,3,\dots,n,
\eeq
leads to the identity
$$
 f_n(\xx)=g_n(\uu),
$$
where the function $g_n$ is defined by the formula involving $O(n)$ summands,
\begin{equation}
\label{gn}
 g_n(\uu)=\sum_{j=1}^n L(u_{j-1},u_j),
 \qquad
 L(t,s)=s+\frac{1+t}{s},
\end{equation}
and $\uu=(0,u_1,u_2,\dots,u_n)$. Put $\hat\uu=(u_1,u_2,\dots,u_n)$.
The transformation \eqref{chvar}, $\xx\mapsto\hat\uu$, is a bijection of $\RR_+^n$ onto itself. Hence
\begin{equation}
\label{inf-f-g}
 A_n=\inf_{\hat\uu>0} g_n(\uu).
\end{equation}
Here `$\inf$' can be replaced by `$\min$' ---this is almost obvious and taken for granted in 
\cite{Shallit_1994}
and in \GDM. 
Let us give a formal proof: since $g_n(0,1,1,\dots,1)=3n-1$, 
it suffices in the right-hand side of \eqref{inf-f-g} to consider $\hat\uu\in K$, where  $K$ is the cube in $\RR_+^n$ defined by the inequalities
$ (3n-1)^{-1}\leq u_j \leq 3n-1$, $j=1,\dots,n$.
Since $K$ is compact, the function $g_n$ attains its minimum value $A_n\leq 3n-1$ on $K$; hence $C_n\geq 1$ by \eqref{Cn}. 

The necessary condition of extremum, $\nabla g_n(\uu)=0$, holds at a point of minimum.  
In the explicit form it reads
\begin{equation}
\label{criteq}
\ba{l}\dst
\frac{1+u_{j-1}}{u_j^2}= \frac{1}{u_{j+1}}+1, \qquad j=1,\dots,n-1,
  \\[3ex]\dst 
  \frac{1+u_{n-1}}{u_n^2}=1,
  \quad u_0=0.
\ea
\end{equation}
We prove below, in Lemma~\ref{lem:uniq}(b), that the solution is unique. 
For this reason we write ``{\it the}\ solution $\uu$'', ``{\it the}\ trajectory $\Tra{n}$''
from now on, although, logically, the definite article is not fully justified until Lemma~\ref{lem:uniq}(b).  

\subsection{The dynamical system} 
\label{ssec:dynsys}

We will interpret the critical point equations \eqref{criteq} as a boundary value problem for a trajectory of a dynamical system
with discrete time. 
Introduce the partial self-map of $\RR^2$, $\Phi:(p,u)\mapsto (p',u')$, by the formulas
\begin{equation}
\label{mapPhi}
 p'=p^2(u+1)-1,\qquad u'=\frac{1}{p} \quad (p\neq 0).
\end{equation}
A \emph{trajectory}\ $\Tra{}$ with initial point $(p_0,u_0)$ is 
a finite or infinite sequence of  iterations $(p_j,u_j)=\Phi^{(j)}(p_0,u_0)$.
In general, one may consider trajectories in $\RR^2$, but we will only need the points with $p_j\geq 0$. 
 
Specifically, let  $\uu=\uu^{(n)}$ be the solution vector of the critical point equations \eqref{criteq}.
Put $p_{j-1}=1/u_j$ ($j=1,\dots,n$) and $p_n=0$.
The finite sequence $\Tra{n}=\{(p_j,u_j)\}$, $j=0,1,\dots,n$, is a trajectory of the map $\Phi$ such that 
$p_j>0$ for $0\leq j\leq n-1$ and the boundary conditions
\begin{equation}
\label{bcond}
u_0=p_n=0
\end{equation}
are satisfied. 
%

\section{Analysis of trajectories}
\label{sec:trajan}

\subsection{A qualitative overview}
\label{ssec:overview}

The unique fixed point of the map $\Phi$ in $\RR_+^2$ is $P_0=(1,1)$.
It is hyperbolic: the Jacobi matrix 
$$
 D\Phi(1,1)=\mat{cc}{4 & 1\\-1 & 0}
$$
has the eigenvalues $\rho=2+\sqrt{3}$ and $\rho^{-1}=2-\sqrt{3}$. From the general theory of dynamical system it is known 
(see e.g.\ \cite[Theorem~6.2.3]{KH_1996}) that there exist the stable curve $\gamma_s$ and the unstable curve
$\gamma_u$ defined at least in some neighborhooud of the point $P_0$. 
The curve $\gamma_s$ is tangent to the stable separatrix $\tau_s$ of the linearized map $D\Phi(1,1)$
and $\gamma_u$ is tangent to the unstable separatrix $\tau_u$.  
  
\begin{figure}[ht]
\begin{center}
{\scriptsize 
\begin{picture}(175,175)
\put(0,0){\includegraphics[viewport=10 81 360 426, clip=true,scale=0.5]{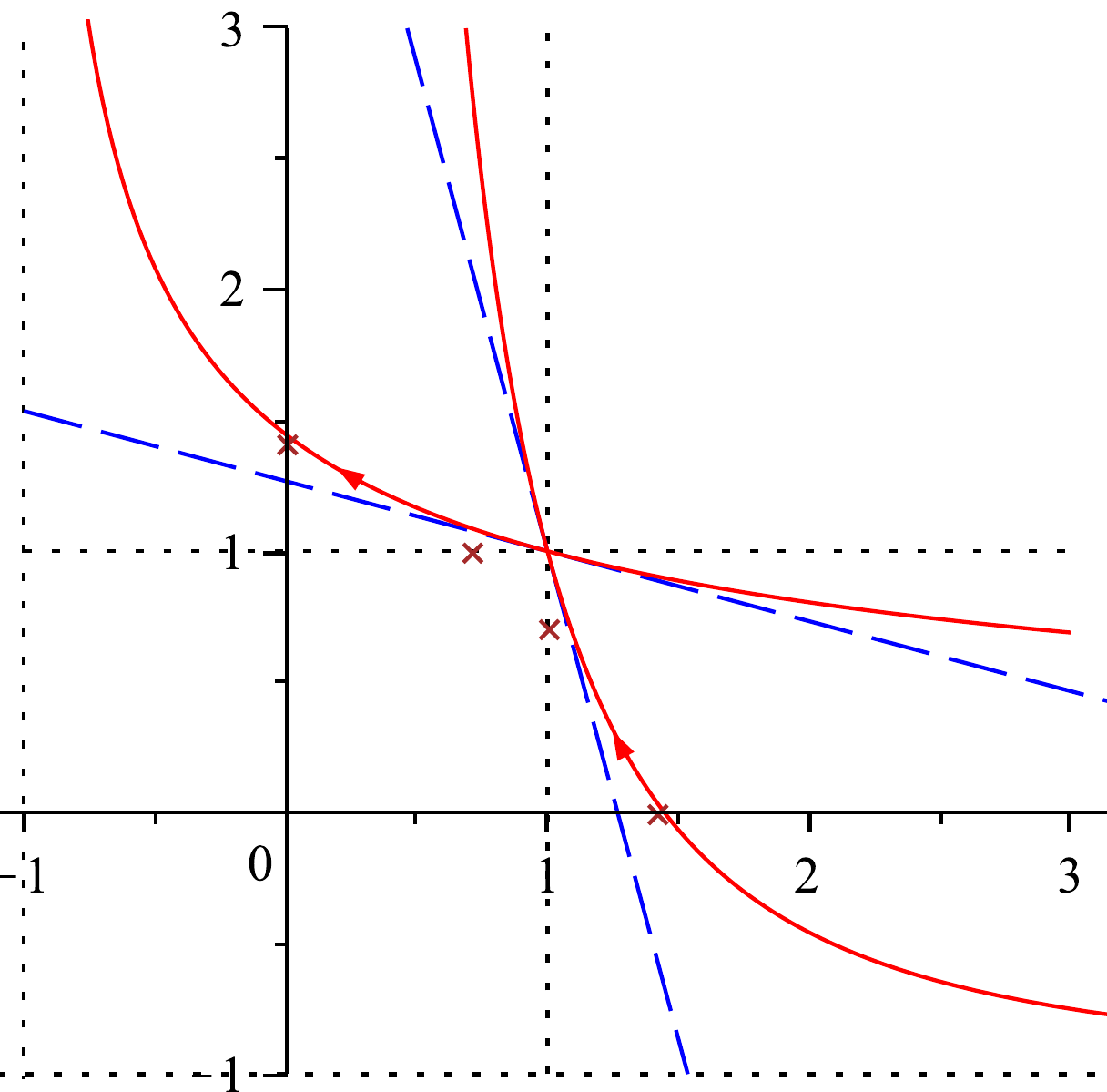}}
\put(174,48){$p$}
\put(48,155){$u$}
\put(94,10){$\tau_s$}
\put(10,109){$\tau_u$}
\put(148,20){$\gamma_s$}
\put(18,158){$\gamma_u$}
%
\put(90,90){$P_0(1,1)$}
\thinlines
\put(107,44){$\line(2,1){10}$}
\put(117,50){$P_s(p_0^*,0)$}
\put(46,105){$\line(1,2){5}$}
\put(50,118){$P_u$}
\put(104,43){$\line(0,-1){10}$}
\put(104,33){$\line(1,-2){7}$}
\put(110,16){$(p_{0},0)$}
\put(87,71){$\line(-1,-3){6}$}
\put(81,53){$\line(-1,0){4}$}
\put(48,50){$(p_{1},u_{1})$}
\put(73,83){$\line(-1,-1){10}$}
\put(63,73){$\line(-1,0){23}$}
\put(12,70){$(p_{2},u_{2})$}
\put(46,102){$\line(2,1){10}$}
\put(56,106){$(0,u_{3})$}
\end{picture}
}
\end{center}
\caption{Invariant curves and the trajectory $\Tra{3}$}  
\label{fig:invcurves}
\end{figure}
   
For the existence of the asymptotics \eqref{ShAsym} the following fact is crusial: %
the stable curve intersects the semiaxis $u=0$, $p>0$ at some point $P_s$;
similarly, the unstable curve intersects the semiaxis $p=0$, $u>0$ at some point $P_u$.
The trajectory $\Tra{n}$ satisfying the boundary conditions \eqref{bcond} begins near $P_s$ and ends near $P_u$.
Earlier iterations rapidly approach $P_0$ almost along the arc of $\gamma_s$ between $P_s$ and $P_0$, while later iterations move away from the fixed point almost along the arc of $\gamma_u$ between $P_0$ and $P_u$. 
All but $O(1)$ of the total number $n+1$ points of the trajectory lie in a prescribed (arbitrarily small) neighborhood of the fixed point.   
Correspondingly, most of the summands $L(u_{j-1},u_j)$ in \eqref{gn} are close to $L(1,1)=3$.

As important as the invariant curves are for understanding of the dynamics of the map $\Phi$, our proof of 
Shallit's asymptotics in its original form leaves them behind the scenes.
They will be used explicitly in the order-sharp analysis of Section~\ref{sec:convrates}. 

The points  of the trajectory $\Tra{n}$ will be denoted as $(p_j, u_j)$ when $n$ is fixed in the current context and as $(p_{j,n},u_{j,n})$
when a need arises to indicate the dependence of the coordinates on $n$ explicitly. 
The notation/convention $u_j=u_{j,n}$ has already been in use in \GDM.

\subsection{Identities} 
\label{ssec:conserv}

\begin{lemma}
\label{lem:conserv}
(a) If $(p',u')=\Phi(p,u)$, then $p'u'+u'=pu+p$.

(b) In particular, $p_j u_j+u_j=p_{j-1}u_{j-1}+p_{j-1}$ 
for any trajectory. 

(c) For the trajectory $\Tra{n}$ and $1\leq\ell\leq n$,  
$$
\sum_{j=1}^\ell (p_{j-1}-u_j)=p_\ell u_\ell.
$$
\end{lemma}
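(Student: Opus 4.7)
For part (a) the plan is a one-line verification. Substitute the explicit formulas $p'=p^2(u+1)-1$ and $u'=1/p$ from \eqref{mapPhi} into the left-hand side:
\[
p'u'+u' = \frac{p^2(u+1)-1}{p}+\frac{1}{p} = \frac{p^2(u+1)}{p} = p(u+1) = pu+p.
\]
No cleverness needed; this is just algebra, and it requires $p\neq 0$, which is already part of the domain of $\Phi$.

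Part (b) is immediate: apply (a) to the consecutive trajectory points $(p_{j-1},u_{j-1})$ and $(p_j,u_j)=\Phi(p_{j-1},u_{j-1})$. No extra work.

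For part (c) the plan is to reorganize the identity from (b) into a telescoping form. Rewrite $p_ju_j+u_j=p_{j-1}u_{j-1}+p_{j-1}$ as
\[
p_ju_j-p_{j-1}u_{j-1} = p_{j-1}-u_j.
\]
Thus the quantity $J_j:=p_ju_j$ has first difference exactly $p_{j-1}-u_j$. Summing from $j=1$ to $\ell$ telescopes to
\[
\sum_{j=1}^\ell (p_{j-1}-u_j) = J_\ell - J_0 = p_\ell u_\ell - p_0 u_0.
\]
Now invoke the boundary condition \eqref{bcond} for the trajectory $\Tra{n}$: $u_0=0$, so $J_0=0$ and we obtain the claimed identity. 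The only ``obstacle'' is recognizing that (b) rearranges to a telescoping statement about $p_ju_j$ rather than about $p_ju_j+u_j$; once that is noticed, the argument is mechanical and uses nothing beyond the boundary condition $u_0=0$.
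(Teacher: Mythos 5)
Your proof is correct and follows exactly the paper's route: (a) is a direct substitution check, (b) is the specialization to consecutive trajectory points, and (c) is the telescoping sum of (b) using the boundary condition $u_0=0$. The paper's own proof is just a terser version of the same argument.
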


\begin{proof}
(a)  Simple check.

(c) It follows from (b) by telescopic summation, since $u_0=0$.  
\end{proof}

\subsection{Monotonicity with respect to the initial condition} 
\label{ssec:mon-init}
Let $P_0(t)=t$, $U_0(t)=0$ and define the functions $P_j(t)$, $U_j(t)$, $j=1,2,\dots$ recurrently by
$$
 (P_j(t),U_j(t))=\Phi(P_{j-1}(t),U_{j-1}(t)).
$$
Clearly, $P_j(t)$ and $U_j(t)$ are rational functions.

\begin{lemma}
\label{lem:incr}
\label{lem:uniq}

(a) The functions $P_{j}(\cdot)$ ($j\geq 0$) and $P_j(\cdot)U_j(\cdot)$ are increasing on $\RR_+$; the functions $U_{j}(\cdot)$ ($j\geq 1$) are decreasing.

(b) The trajectory $\{(p_j,u_j)\}=\{\Phi^{(j)}(p_0,0)\}$, $j=0,\dots,n$, with $p_{j-1},u_j>0$ ($j=1,\dots,n)$ and
$p_n=0$
is unique.

(c) The coordinates of the points on the trajectories $\Tra{n-1}$ and $\Tra{n}$ 
satisfy the inequalities 
\begin{equation}
\label{monot-in-n}
p_{j,n}>p_{j,n-1} \qquad (0\leq j\leq n-1).
\end{equation} 
\end{lemma}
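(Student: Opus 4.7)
My plan is to establish (a) by induction on $j$ and then deduce (b) and (c) from the strict monotonicity it provides. For part (a), I interpret ``increasing on $\RR_+$'' as meaning strictly monotonic on the natural domain $D_j = \{t > 0 : P_k(t) > 0 \text{ for all } k < j\}$, since the rational functions $P_j$ and $U_j$ can acquire poles from zeros of earlier iterates. Inductively one verifies that each $D_j$ is a half-line $(t_{j-1}, \infty)$ with $t_0 = 0 < t_1 = 1 < \ldots$, that $P_j(t) \to -1$ as $t \to t_{j-1}^+$ (from $P_j = P_{j-1}^2(U_{j-1}+1) - 1$ and $P_{j-1}(t_{j-1}) = 0$), and that $P_j(t) \to +\infty$ as $t \to \infty$. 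The base case $j = 1$ is immediate from $P_1(t) = t^2 - 1$, $U_1(t) = 1/t$, $P_1 U_1 = t - 1/t$.

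For the inductive step I would use three ingredients. First, $U_{j+1} = 1/P_j$ is strictly decreasing on $D_{j+1}$ because $P_j$ is positive and strictly increasing there. Second, the expansion $P_{j+1} = P_j \cdot (P_j U_j) + P_j^2 - 1$ exhibits $P_{j+1}$ as a sum of products of positive strictly increasing functions, using the inductive hypothesis on both $P_j$ and $P_j U_j$. Third, and most crucially, Lemma~\ref{lem:conserv}(a) applied to $(p,u) = (P_j, U_j)$ and rearranged as $P_{j+1} U_{j+1} = P_j U_j + P_j - U_{j+1}$ writes $P_{j+1} U_{j+1}$ as a sum of three strictly increasing terms, closing the induction on the $P_j U_j$ monotonicity.

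For (b), any valid starting point $p_0$ lies in $D_n$ and satisfies $P_n(p_0) = 0$; strict monotonicity of $P_n$ on the interval $D_n$ gives uniqueness. For (c), the central computation is $P_n(p_{0,n-1}) = -1$: since $P_{n-1}(p_{0,n-1}) = 0$ while $U_{n-1}(p_{0,n-1}) = 1/P_{n-2}(p_{0,n-1})$ is finite and positive, the defining formula for $P_n$ immediately yields $-1$. Since $p_{0,n-1}$ is the left endpoint of $D_n$, on which $P_n$ is increasing from $-1$ to $+\infty$, the unique zero $p_{0,n}$ of $P_n$ satisfies $p_{0,n} > p_{0,n-1}$. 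For $0 \leq j \leq n - 1$, both $p_{0,n-1}$ and $p_{0,n}$ lie in $D_j \supseteq D_{n-1}$, and so $p_{j,n} = P_j(p_{0,n}) > P_j(p_{0,n-1}) = p_{j,n-1}$ by part (a).

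The main subtlety will be the domain bookkeeping in the induction for (a)---in particular, verifying that the half-line structure $D_j = (t_{j-1}, \infty)$ and the boundary behavior $P_j(t_{j-1}^+) = -1$ propagate, because these are precisely what pin down $p_{0,n}$ relative to $p_{0,n-1}$ in (c). Once the algebraic identity from Lemma~\ref{lem:conserv}(a) is harnessed as the link between $P_j U_j$ at successive indices, the monotonicities follow mechanically, and (b) and (c) are almost immediate.
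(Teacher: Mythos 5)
Your proof is correct and uses the same core strategy as the paper: proving (a) by induction driven by the conservation law of Lemma~\ref{lem:conserv}(a), and then deriving (b) and (c) from the strict monotonicity of the $P_j$. A few points of comparison are worth making. In (a) the paper also first updates $P_jU_j$ via $P_jU_j = P_{j-1}U_{j-1} + P_{j-1} - 1/P_{j-1}$ and then obtains monotonicity of $P_j$ from the compact identity $P_j = (P_jU_j)\,P_{j-1}$ (product of two positive increasing functions); your expansion $P_{j+1} = P_j(P_jU_j) + P_j^2 - 1$ is a slightly more verbose but equivalent decomposition. Your attention to the natural domain $D_j=(t_{j-1},\infty)$ and to the limiting value $P_j(t_{j-1}^+)=-1$ is a genuine improvement in rigor: the functions $P_j$, $U_j$ do acquire poles, and the paper is silent on where exactly the monotonicity is being asserted. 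For (c), you observe $P_n(p_{0,n-1})=-1<0=P_n(p_{0,n})$, giving $p_{0,n}>p_{0,n-1}$, and then apply $P_j$; the paper instead uses the implication-equivalent observation $P_{n-1}(p_{0,n})=p_{n-1,n}>0=P_{n-1}(p_{0,n-1})$, though the formula as printed is indexed $P_{n-j-1}(p_{j,n})=p_{n-1,n}$, which is literally correct only for $j=0$ (the trajectory points $(p_{j,n},u_{j,n})$ have $u_{j,n}\neq 0$ for $j\geq 1$, so $\Phi^{(n-j-1)}(p_{j,n},0)\neq (p_{n-1,n},u_{n-1,n})$). Your version of (c) sidesteps this issue cleanly.
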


\begin{proof} 
%
(a) 
It follows by induction using the identity of Lemma~\ref{lem:conserv}(a) in the form
$P_j U_j=P_{j-1} U_{j-1}+P_{j-1}-1/P_{j-1}$ 
and the trivial identity
$P_j=(P_j U_j) P_{j-1}$.

(b) For the given $n$, the initial coordinate $p_0$ satisfies the equation $P_n(p_0)=0$. By (a), a positive solution of this equation is unique. 

(c) Similarly, \eqref{monot-in-n} follows by (a) from the inequality 
$$
P_{n-j-1}(p_{j,n})=p_{n-1,n}>0=p_{n-1,n-1}=P_{n-j-1}(p_{j,n-1}).
$$ 
\end{proof}

{\bf Remark.}
The monotonicity property \eqref{monot-in-n} is experimentally observed in \GDM, Table 1 and Eqs.\ (9), (10). 

\subsection{Symmetry of the trajectory}
\label{ssec:sym}
 
\begin{lemma}
\label{lem:id}
(a) The map $\Phi$ is \emph{reversible}\ with respect to the involution $\sigma: (p,u)\mapsto(u,p)$,
that is, $\Phi^{-1}(p,u) =\sigma\circ\Phi(u,p)$.

(b) For the points on the trajectory for the given $n$, the identity
\begin{equation}
\label{revers}
u_j=p_{n-j} \qquad (0\leq j\leq n)
\end{equation}
holds. In particular, $u_n=p_0$.

(c) If $n=2k$, then $p_k=u_k$. If $n=2k-1$, then $p_{k-1}=u_{k}=1$. 
\end{lemma}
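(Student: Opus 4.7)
The three parts cascade, so I would attack them in order, with the real work going into (a) and (b).

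For part (a), the plan is a direct verification. I would write both sides of $\Phi^{-1}(p,u)=\sigma\circ\Phi(u,p)$ explicitly from the defining formula \eqref{mapPhi}. Solving $\Phi(P,U)=(p,u)$ for $(P,U)$ gives $P=1/u$ and $U=u^2(p+1)-1$, while $\sigma\circ\Phi(u,p)=\sigma(u^2(p+1)-1,\,1/u)=(1/u,\,u^2(p+1)-1)$. The two expressions agree, so the identity holds wherever both sides are defined.

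For part (b), the strategy is to use reversibility together with the uniqueness from Lemma~\ref{lem:uniq}(b). Given the trajectory $\Tra{n}=\{(p_j,u_j)\}_{j=0}^{n}$ with $u_0=p_n=0$, I would introduce the \emph{reversed} sequence
\[
(\tilde p_j,\tilde u_j):=\sigma(p_{n-j},u_{n-j})=(u_{n-j},p_{n-j}),\qquad j=0,1,\dots,n.
\]
Using part (a), applied to each step $(p_{n-j-1},u_{n-j-1})=\Phi^{-1}(p_{n-j},u_{n-j})$, I would show that $\Phi(\tilde p_j,\tilde u_j)=(\tilde p_{j+1},\tilde u_{j+1})$; so $\{(\tilde p_j,\tilde u_j)\}$ is itself a $\Phi$-trajectory. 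The boundary conditions carry over: $\tilde u_0=p_n=0$ and $\tilde p_n=u_0=0$, and positivity of the intermediate coordinates $\tilde p_{j-1},\tilde u_j>0$ for $1\le j\le n$ follows from the corresponding positivity for $\Tra{n}$. Lemma~\ref{lem:uniq}(b) therefore forces $(\tilde p_j,\tilde u_j)=(p_j,u_j)$, which is exactly \eqref{revers}.

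Part (c) is a short specialization of (b). Setting $j=k$ when $n=2k$ gives $u_k=p_{n-k}=p_k$. When $n=2k-1$, setting $j=k$ in \eqref{revers} gives $u_k=p_{k-1}$; combining this with the definitional relation $u_k=1/p_{k-1}$ from \eqref{mapPhi} yields $p_{k-1}^2=1$, and positivity picks out $p_{k-1}=u_k=1$.

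The only conceptual step is part (b): one must recognize that ``reversing and swapping'' produces another admissible trajectory, and that Lemma~\ref{lem:uniq}(b) applies to it. Once that is set up, parts (a) and (c) are mechanical.
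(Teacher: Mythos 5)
Your proof is correct and follows essentially the same route as the paper: part (a) by direct computation, part (b) by noting that the reversed-and-swapped sequence is again an admissible trajectory and invoking the uniqueness of Lemma~\ref{lem:uniq}(b), and part (c) as a specialization of \eqref{revers} combined with $u_k=1/p_{k-1}$. The only difference is one of detail: the paper simply states that the boundary value problem is invariant under the substitution $(p_j,u_j)\mapsto(u_{n-j},p_{n-j})$, whereas you unpack that invariance into the explicit check that $\{(\tilde p_j,\tilde u_j)\}$ is a $\Phi$-trajectory satisfying the same boundary and positivity conditions.
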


\begin{proof}
(a) Simple check.

(b) By (a), the boundary problem described in Sec.~\ref{ssec:dynsys} is invariant under the substitution $(p_{j},u_j)\mapsto (u_{n-j},p_{n-j})$.
 Due to the uniqueness result --- Lemma~\ref{lem:incr}(b) --- the equalities \eqref{revers} hold.

(c) For even $n$, the claim is a particular case of \eqref{revers}. For $n=2k-1$, we have $p_{k-1}=u_{k}$; on the other hand,
$u_{k}=1/p_{k-1}$ according to the definition of $\Phi$. Hence $u_k=1$. 
\end{proof}

{\bf Remarks.}
1. The identity $u_k=1$ in the case $n=2k-1$ is mentioned without elaboration in the first line of Eq.\ (7) in \GDM.

2. 
GDM also mention that the boundary conditions for the partial trajectory $\{(p_j,u_j)\}$, $0\leq j\leq k$,
\begin{equation}
\label{bcondk}
  u_0=0,   \qquad 
  p_k=\begin{cases}
        1, \quad \text{odd $n$},  \\
        u_k,\quad \text{even $n$},
  \end{cases}
\end{equation}
supply a more economical method for computing $p_0$ as compared to the method using the boundary conditions \eqref{bcond}. 

\subsection{Monotonicity of the trajectory}
\label{ssec:mon-traj}
 
\begin{lemma}
\label{lem:nw}
The trajectory $\Tra{n}$ propagates north-west. That is,
$$
  p_0>p_1>\dots>p_{n-1}>p_n=0; \qquad
  0=u_0<u_1<\dots<u_n.
$$
\end{lemma}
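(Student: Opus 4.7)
The plan is to reduce the two monotonicity statements to one via the symmetry of Lemma~\ref{lem:id}(b), and then to obtain the surviving chain by comparing $\Tra{n}$ with $\Tra{n-1}$ using the two strict monotonicities collected in Section~\ref{ssec:mon-init}. Concretely, since $u_j = p_{n-j}$ by Lemma~\ref{lem:id}(b), the inequalities $p_0 > \cdots > p_n$ and $u_0 < \cdots < u_n$ are equivalent, so it is enough to prove $u_{m-1,n} < u_{m,n}$ for $m = 1, \dots, n$. The case $n=1$, where the trajectory is just $(1,0),(0,1)$, is direct; I would assume $n \geq 2$ for the rest.

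My first step would be to translate Lemma~\ref{lem:incr}(c), $p_{k,n} > p_{k,n-1}$ for $0 \leq k \leq n-1$, through the symmetry formula applied separately to $\Tra{n}$ (giving $p_{k,n} = u_{n-k,n}$) and to $\Tra{n-1}$ (giving $p_{k,n-1} = u_{n-1-k,n-1}$). Re-indexing $m = n - k$, this becomes
\begin{equation*}
  u_{m,n} > u_{m-1,n-1}, \qquad m = 1, \dots, n.
\end{equation*}
My second step would be to apply Lemma~\ref{lem:incr}(a) to the function $U_{m-1}$: it is strictly decreasing on $\RR_+$ for $m \geq 2$, while $U_0 \equiv 0$. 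Combined with $p_{0,n} > p_{0,n-1}$ from Lemma~\ref{lem:incr}(c) (case $k = 0$), this yields
\begin{equation*}
  u_{m-1,n} = U_{m-1}(p_{0,n}) \leq U_{m-1}(p_{0,n-1}) = u_{m-1,n-1},
\end{equation*}
strict for $m \geq 2$ and degenerating to the trivial $0 = 0$ at $m = 1$.

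Concatenating the two displayed inequalities gives $u_{m,n} > u_{m-1,n-1} \geq u_{m-1,n}$, hence $u_{m,n} > u_{m-1,n}$, for every $m = 1, \dots, n$; the $p$-monotonicity then follows by the symmetry identity once again. There is no real obstacle here: the only thing requiring care is bookkeeping, namely that the symmetry formula is applied to two trajectories of different lengths $n$ and $n-1$, and that the degenerate $m=1$ link in the second inequality does not spoil overall strictness because the first inequality is already strict at $m=1$ ($u_{1,n} > u_{0,n-1} = 0$).
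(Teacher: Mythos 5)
Your argument is correct, but it follows a genuinely different route from the paper's. The paper proves the $p$-chain directly inside the single trajectory $\Tra{n}$: for each $j$ it compares $p_{j-1}$ with $p_j$ by feeding them into the increasing function $P_{n-j}(\cdot)$ of Lemma~\ref{lem:incr}(a), identifying the resulting values with $p_{n-1}>0$ and $p_n=0$, and then obtains the $u$-chain ``trivially'' from $u_j=1/p_{j-1}$; neither the symmetry $u_j=p_{n-j}$ nor the neighbouring trajectory $\Tra{n-1}$ enters. You instead route everything through Lemma~\ref{lem:id}(b) and the inter-trajectory monotonicity of Lemma~\ref{lem:incr}(c), sandwiching $u_{m-1,n}\le u_{m-1,n-1}<u_{m,n}$. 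Both arguments use only material established before the lemma, so there is no circularity, and your bookkeeping (two trajectories of different lengths, the degenerate $m=1$ link whose strictness is rescued by the strict first inequality) is handled correctly. What your detour buys is that $P_j$ and $U_j$ are only ever evaluated at the genuine initial abscissae $p_{0,n}$ and $p_{0,n-1}$, i.e.\ at points of the form $(t,0)$ where $P_j(t),U_j(t)$ literally coincide with trajectory coordinates; the paper's one-line identification $P_{n-j}(p_{j-1})=p_{n-1}$ tacitly applies these one-variable functions to an interior trajectory point $(p_{j-1},u_{j-1})$ whose $u$-coordinate is not zero, so your version is, if anything, the more watertight of the two. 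The price is the extra dependence on the symmetry lemma and on part (c) of Lemma~\ref{lem:incr}, which the paper's shorter proof avoids.
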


{\bf Remark.}
This property, as a numerical fact, is stated in \GDM, Eq.\ (7).

\begin{proof} 
Let $1\leq j\leq n$. We have $P_{n-j}(p_{j-1})=p_{n-1}>0$, while $P_{n-j}(p_{j})=p_{n}=0$. 
By Lemma~\ref{lem:incr}(a), $p_{j-1}>p_{j}$. The conclusion for the sequence $(u_j)$ follows trivially.  
\end{proof}

\begin{lemma}
\label{lem:subhyperbola}
The following inequalities hold along the trajectory $\Tra{n}$:
\\
 (a) $p_j u_j<1$  for $0\leq j\leq n$;
\\
(b)  $p_j\geq 1$ for $0\leq j\leq (n-1)/2$ and $u_j\geq 1$ for $(n+1)/2\leq j\leq n$;
\\
(c)  $u_j<1$ for $0\leq j\leq n/2$ and $p_j<1$ for $n/2\leq j\leq n$. 
\end{lemma}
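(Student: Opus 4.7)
I would prove the three assertions in the order (b), (c), (a), leaning on the symmetry and monotonicity already established (Lemmas~\ref{lem:id} and \ref{lem:nw}) plus the telescoping recurrence coming from Lemma~\ref{lem:conserv}.

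The first task is to locate precisely where the sequence $(p_j)$ crosses the value $1$. In the odd case $n=2k-1$, Lemma~\ref{lem:id}(c) gives this for free: $p_{k-1}=u_k=1$. In the even case $n=2k$, Lemma~\ref{lem:id}(c) only says $p_k=u_k$; I would combine this with the definition of $\Phi$, which forces $u_k=1/p_{k-1}$, to get the relation $p_{k-1}p_k=1$. The strict monotonicity $p_{k-1}>p_k$ from Lemma~\ref{lem:nw} then pins down $p_k<1<p_{k-1}$. With the crossing point located, Lemma~\ref{lem:nw} immediately yields (b) and (c) after matching the integer indices to $(n\pm 1)/2$ and $n/2$ in the two parities; in particular the strict inequality $u_j<1$ in (c) for the even case comes from $u_j\leq u_k=p_k<1$.

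For (a), my plan is to track $a_j:=p_ju_j$ directly. Rewriting Lemma~\ref{lem:conserv}(b) with the identity $u_j=1/p_{j-1}$ gives the clean recurrence
\[
 a_j-a_{j-1}=p_{j-1}-u_j=\frac{p_{j-1}^2-1}{p_{j-1}}.
\]
Hence $a_j$ is strictly increasing while $p_{j-1}>1$ and strictly decreasing once $p_{j-1}<1$. Using the sign information just established in (b), the maximum of $a_j$ is attained at $j=k$ (when $n=2k$) or at $j\in\{k-1,k\}$ (when $n=2k-1$), and by Lemma~\ref{lem:id}(b) the whole sequence is palindromic: $a_j=a_{n-j}$, which is consistent with this location. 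I would then evaluate: $a_k=p_k^2<1$ in the even case, and in the odd case $a_{k-1}=u_{k-1}<u_k=1$ and $a_k=p_k<p_{k-1}=1$. Since these are the maxima of $(a_j)$, (a) follows.

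The only genuinely non-obvious step is the even case of the crossing lemma, where one must extract $p_{k-1}p_k=1$ from Lemma~\ref{lem:id}(c) together with $u_k=1/p_{k-1}$ before monotonicity can be applied. Once this is unraveled, all three parts drop out of the preceding structural lemmas without further computation.
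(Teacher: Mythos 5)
Your proof is correct, but it inverts the paper's logic and replaces the key step by a different argument. The paper proves (a) first, in one line: since $u_{j+1}=1/p_j$, one has $p_j=1/u_{j+1}<1/u_j$ by the monotonicity of $(u_j)$ from Lemma~\ref{lem:nw}, whence $p_ju_j<1$ immediately (the endpoints $j=0,n$ being trivial). It then gets (b) from the single chain $p_j^2\geq p_jp_{n-j-1}=p_ju_{j+1}=1$, combining Lemma~\ref{lem:nw} with the palindromic symmetry $u_{j+1}=p_{n-j-1}$ of Lemma~\ref{lem:id}(b), and deduces (c) from (a) and (b). Your treatment of (b) and (c) uses the same two ingredients (symmetry and monotonicity) but packages them as a ``crossing point'' computation via Lemma~\ref{lem:id}(c), including the nice observation $p_{k-1}p_k=1$ in the even case; this is a legitimate variant of equal weight. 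Where you genuinely diverge is (a): you run a discrete unimodality argument for $a_j=p_ju_j$ via the conservation identity $a_j-a_{j-1}=(p_{j-1}^2-1)/p_{j-1}$, locate its maximum at the middle of the trajectory using the already-proved (b), and evaluate it there. This is sound (and the telescoped increments, the palindromy $a_j=a_{n-j}$, and the midpoint values all check out), and it yields the extra structural fact that $(p_ju_j)$ increases to the middle and then decreases; but it makes (a) depend on (b), whereas the paper's one-line observation $p_j=1/u_{j+1}<1/u_j$ gets (a) for free from monotonicity alone. You missed that shortcut, at the cost of length but not of correctness.
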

 
\begin{proof}
(a) We have $p_j=1/u_{j+1}<1/u_j$ by Lemma~\ref{lem:nw}.

(b) 
%
If $0\leq j\leq (n-1)/2$, then $p_j^2\geq p_j p_{n-j-1}=p_j u_{j+1}=1$,
where we use Lemma~\ref{lem:nw} and Lemma~\ref{lem:id}(b). 

(c) is a consequence of (a) and (b).
\end{proof}

\subsection{Boundedness and limits} 
\label{ssec:pu-limits}

%
Put $\phi=(1+\sqrt{5})/{2}$.

\begin{lemma}
\label{lem:bdd}
The sequence $(p_{0,n})$ is bounded:  $p_{0,n}< \phi$ for all $n\geq 1$.
\end{lemma}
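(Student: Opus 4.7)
The plan is to exhibit a forward-invariant region of the map $\Phi$ whose intersection with the trajectory semiplane $u\geq 0$ lies entirely in $\{p\geq \phi\}$; then any trajectory starting at $p_0\geq \phi$ stays bounded away from the axis $p=0$, contradicting the terminal condition $p_n=0$.

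Concretely, I would consider the region $R=\{(p,u):p\geq \phi,\ u\geq 0\}$ and verify that $\Phi(R)\subset R$. For $(p,u)\in R$, the first coordinate of the image is
$$
 p'=p^2(u+1)-1\ \geq\ p^2-1\ \geq\ \phi^2-1\ =\ \phi,
$$
where the crucial identity $\phi^2=\phi+1$ is what pins down the specific value $\phi=(1+\sqrt 5)/2$ as the relevant threshold (it is the positive fixed point of $p\mapsto p^2-1$). The second coordinate $u'=1/p$ is automatically nonnegative because $p>0$. Thus $R$ is indeed forward-invariant.

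Now assume, for contradiction, that $p_{0,n}\geq \phi$ for some $n\geq 1$. Since $u_0=0$, the initial point $(p_0,u_0)$ belongs to $R$, so by induction every $(p_j,u_j)$, $0\leq j\leq n$, belongs to $R$. In particular $p_n\geq \phi>0$, contradicting the boundary condition $p_n=0$ from \eqref{bcond}. Hence $p_{0,n}<\phi$ as claimed.

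There is no real obstacle: the only thing to verify is the invariant-region inclusion, which reduces to the one-line inequality above and the defining property of $\phi$. The slightly nontrivial point to keep in mind is why $\phi$ rather than some other constant appears: it is precisely the smallest number greater than $1$ for which the map $p\mapsto p^2-1$ (the restriction of the $p$-component of $\Phi$ to the axis $u=0$) becomes non-contracting upward, giving the sharpest bound attainable by this elementary argument.
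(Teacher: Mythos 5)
Your proof is correct. The forward-invariance of $R=\{p\geq\phi,\ u\geq 0\}$ does hold: for $(p,u)\in R$ one has $u'=1/p>0$ and $p'=p^2(u+1)-1\geq p^2-1\geq\phi^2-1=\phi$, and the trajectory values $u_j$ are indeed nonnegative (with $u_0=0$), so the induction and the contradiction with $p_n=0$ go through cleanly.

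Your route is genuinely different from the paper's, though both pivot on $\phi$ being the positive root of $p^2-p-1$. The paper's proof is a one-liner that leans on already-established machinery: by Lemma~\ref{lem:nw} one has $p_0>p_1$, while $p_1=p_0^2(u_0+1)-1=p_0^2-1$ since $u_0=0$; combining gives $p_0^2-p_0-1<0$, i.e.\ $p_0<\phi$. In other words, the paper uses the strict monotonicity $p_0>p_1$ (which itself rests on the monotonicity of $P_j(\cdot)$ from Lemma~\ref{lem:incr}(a)) and never needs to chase the trajectory beyond its first step. Your argument trades that dependency for a global iteration: you use only the definition of $\Phi$ and the terminal condition $p_n=0$, propagating the invariant region all the way to $j=n$. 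The payoff is that your proof is entirely self-contained and does not invoke Lemma~\ref{lem:nw}; the cost is that the paper's proof is shorter once the monotonicity lemma is in place. Both yield the strict inequality $p_{0,n}<\phi$.
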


\begin{proof} Omitting the second index $n$, we have
$p_{0}>p_{1}$ by Lemma~\ref{lem:nw}. On the other hand,
$p_{1}=p_{0}^2-1$. Hence $p_{0}^2-p_{0}-1<0$, and the claim follows.
\end{proof}

From Lemmas~\ref{lem:nw}, \ref{lem:bdd} and the monotonicity \eqref{monot-in-n} it follows that for every fixed $j=0,1,2,\dots$ 
the sequence $(p_{j,n})$ ($n=1,2,\dots$) is increasing and bounded. Therefore  
there exist the limits
\begin{equation}
\label{limits-pu}
\ba{rcl}
  p_j^*&=&\uparrow\!\lim_{n\to\infty} p_{j,n}\qquad (j\geq 0),
  \\[1ex]
  u_j^*&=&\downarrow\!\lim_{n\to\infty} u_{j,n}=\dst\frac{1}{p_{j-1}^*} 
  \qquad (j\geq 1).
 \ea 
\end{equation}
Clearly, 
$\phi\geq p_0^*\geq p_1^*\geq p_2^*\geq \dots\geq 1$
and $1/p_0^*=u_1^*\leq u_2^*\leq\dots\leq 1$.

\medskip\noindent
{\bf Remarks.} 
1. The points $P_s$ and $P_u$ 
(Fig.~\ref{fig:invcurves}) have the coordinates $(p_0^*,0)$ and $(0,p_0^*)$, respectively. 
However, this intuitively obvious fact requires a proof (preceded by a precise definition of the curve $\gamma_s$), cf.\ 
 Lemma~\ref{lem:pt-on-invcurv} in Section~\ref{ssec:invcurves}.

\smallskip
2. Let us introduce the \emph{deviations from the fixed point}\
\begin{equation}
\label{dev}
\lambda_{j,n}=1-u_{j,n}.
\end{equation}
These parameters will be very useful in Section~\ref{sec:convrates}. 
Consistent with our convention, we will use the abbreaviated notation $\lambda_{j,n}=\lambda_j$ when $n$ is fixed
and denote the limit values $\lambda_j^*=1-u_j^*$. (Note that in \GDM, $\lambda_j$ stands for the limit values.)
Eliminating $p_j=p_{j,n}$ from the
relations that hold on the trajectory $\Tra{n}$,
$$
  p_j=\frac{1}{u_j+1}\qquad\mbox{\rm and}\qquad u_{j-1}=u_j^2(p_j+1)-1,
$$
yields the second order recurrence relation 
\begin{equation}
\label{rr-lambda}
\lambda_{j+1}=\frac{4\lambda_{j}-\lambda_{j-1}-2\lambda_{j}^2}{1+2\lambda_{j}-\lambda_{j-1}-\lambda_{j}^2}.
\end{equation} 
The limit $(n\to\infty)$ form of this recurence applies to the values $\lambda_j^*$ and coincides with Eq.\ (21) in \GDM. 



\subsection{Exponential proximity to the fixed point}
\label{ssec:exp-estim} 


\begin{lemma}
\label{lem:exp}
Put $\norm{j}=\min(j, n-j)$.
For $0\leq j\leq n$ the inequalities 
\begin{equation}
\label{exp-p}
|p_j-1|\leq 2^{-\norm{j}}
\end{equation}
and
\begin{equation}
\label{exp-pu}
0< 1-p_j u_j\leq \phi 2^{-\norm{j}}
\end{equation}
hold along the trajectory $\Tra{n}$. 
\end{lemma}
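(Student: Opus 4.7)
My plan is to prove (b) first by a contraction estimate on $D_j := 1 - p_j u_j$, then derive (a) from (b) by combining it with the symmetry of Lemma~\ref{lem:id}(b). From the definition of $\Phi$ one computes $p_{j+1} u_{j+1} = p_{j+1}/p_j = p_j u_j + p_j - 1/p_j$, giving the one-step recurrence
\[
 D_{j+1} = D_j - (p_j^2 - 1)/p_j.
\]
A brief rearrangement (using $p_j^2(u_j+1) = p_{j+1}+1$) shows that $D_{j+1} \le D_j/2$ is equivalent to $p_j(p_j-1) + p_{j+1} \ge 1$, which is immediate as soon as both $p_j, p_{j+1} \ge 1$. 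By Lemma~\ref{lem:subhyperbola}(b), these inequalities hold whenever $j+1 \le (n-1)/2$, covering all interior steps of the first half.

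The single step that crosses the midpoint needs separate treatment via Lemma~\ref{lem:id}(c). In the odd case $n=2k-1$, the identity $p_{k-1}=1$ gives $D_k = D_{k-1}$ directly, which is acceptable because $\norm{k-1} = \norm{k} = k-1$. In the even case $n=2k$, the identity $p_k = 1/p_{k-1}$ allows, via the factorization $t^3+t^2-t-1 = (t-1)(t+1)^2$, the explicit expressions $D_{k-1} = (p_{k-1}-1)(p_{k-1}+1)^2/p_{k-1}^2$ and $D_k = (p_{k-1}-1)(p_{k-1}+1)/p_{k-1}^2$, yielding $D_k/D_{k-1} = 1/(p_{k-1}+1) \le 1/2$. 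Inducting from $D_0 = 1$ gives $D_j \le 2^{-j}$ for $0 \le j \le n/2$, and the symmetry $D_j = D_{n-j}$ (which follows from $p_j u_j = p_{n-j} u_{n-j}$ via Lemma~\ref{lem:id}(b)) extends this to $D_j \le 2^{-\norm{j}}$. Since $1 < \phi$, this proves (b); positivity $D_j > 0$ is Lemma~\ref{lem:subhyperbola}(a).

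For (a), the same symmetry gives $|p_j - 1| = |u_{n-j} - 1|$, so the task reduces to bounding $|1-u_m|$, and $u_m = 1/p_{m-1}$ turns this into $|p_{m-1}-1|/p_{m-1}$. On the half of indices where $p_{m-1} \ge 1$, this is a one-step contraction $|u_m-1| \le |p_{m-1}-1|$; on the complementary half, the lower bound $p_{m-1} \ge 1/p_0 > 1/\phi$ (from Lemma~\ref{lem:bdd}) introduces at most a multiplicative factor $\phi$. The hard part is that a naive one-variable induction loses the factor $\phi > 1$ per step and fails to close. I resolve this by organizing the argument as a joint induction on $|p_j-1|$ and $|1-u_j|$ together with the $D$-estimate from (b), starting near the midpoint (where $|p_k-1|$ is directly controlled by the relation $p_k = 1/p_{k-1}$ or $p_{k-1}=1$ and the $D$-bound) and working outward toward the endpoints; the $\phi$-inflation in each step is absorbed into the halving $2^{-\norm{j}} \to 2^{-\norm{j+1}}$, which is possible precisely because $\phi < 2$.
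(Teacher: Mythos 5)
Your derivation of \eqref{exp-pu} is correct and genuinely different from the paper's. The identity $D_{j+1}=D_j-(p_j^2-1)/p_j$ for $D_j=1-p_ju_j$, the reduction of $D_{j+1}\le D_j/2$ to $p_j(p_j-1)+p_{j+1}\ge 1$, the factorized treatment of the midpoint step, the anchor $D_0=1$, and the symmetry $D_j=D_{n-j}$ all check out, and you even obtain the cleaner constant $1$ in place of $\phi$. (The paper instead deduces \eqref{exp-pu} in one line \emph{from} \eqref{exp-p}.) The problem is that you still need \eqref{exp-p} separately, and your plan for it does not close.

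The gap is the orientation of the induction for \eqref{exp-p}. The factor $\phi$ you identify comes from the \emph{reflection} $|p_j-1|=|u_{n-j}-1|=|p_{n-j-1}-1|/p_{n-j-1}$, which jumps from index $j$ to index $n-j-1$ (shifting $\norm{\cdot}$ by one unit), not from an actual step of the dynamics. Along the trajectory, one step in the outward direction multiplies the deviation from $P_0$ by strictly more than $2$: for $1\le j\le n/2$, using $p_ju_j<1$ one gets $1-u_{j-1}=2-u_j^2(p_j+1)>(1-u_j)(2+u_j)$, and near the midpoint $u_j$ is close to $1$, so the growth factor is at least $3-o(1)$ (asymptotically it is the unstable eigenvalue $\rho=2+\sqrt3$). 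Your midpoint anchor from part (b) is $1-p_k\le D_k\le 2^{-k}=2^{-\norm{k}}$, i.e.\ it has no slack against the target bound, while the target only doubles per outward step; so the very first outward steps already overshoot, and over $n/2$ steps the deficit accumulates to $(\rho/2)^{n/2}\to\infty$. (The true deviation at the midpoint is $\asymp\rho^{-n/2}\ll 2^{-n/2}$, cf.\ Theorem~\ref{thm:convrates}(a), but nothing available at this stage of the paper gives that.) The fix is to run the induction \emph{inward from an endpoint}, where the value is known exactly: from $1-p_n=1$ and the same inequality read as $1-p_j<\frac12(1-p_{j+1})$ for $n/2\le j\le n-1$ one gets $1-p_j\le 2^{j-n}$ on the second half, and then the reflection you already wrote down, $p_j-1=p_j(1-p_{n-j-1})\le\phi\,2^{-(j+1)}<2^{-j}$, transfers the bound to the first half. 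That is exactly the paper's proof of \eqref{exp-p}; with it in hand, your contraction argument for $D_j$ stands as an independent and slightly sharper route to \eqref{exp-pu}.
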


\begin{proof} 
To prove \eqref{exp-p}, consider three cases.

(i) $n/2\leq j\leq n-1$. Then $p_j u_j<1$ and $p_j<1$ 
by Lemma~\ref{lem:subhyperbola}(a,c). Hence 
$$
 1-p_{j+1}= 2-p_j^2(u_j+1)> 2-p_j-p_j^2=(1-p_j)(2+p_j)> 2(1-p_j)
$$
Since $1-p_n=1$, 
we conclude by induction 
that $1-p_j<2^{j-n}=2^{-\norm{j}}$.

(ii) $n$ odd, $j=(n-1)/2$. In this case $p_j-1=0$ by Lemma~\ref{lem:id}(c).

(iii) $0\leq j<(n-1)/2$. Then $1<p_j=u_{n-j}=1/p_{n-j-1}$
and $n-j-1\geq n/2$. By (i) we get
$$
 p_j-1=p_j(1-p_{n-j-1})<2^{-j-1}p_j.
$$
By Lemmas~\ref{lem:nw} and \ref{lem:bdd}, $p_j\leq p_0<\phi$. Since $\phi<2$, the claim follows. 

\smallskip
To prove \eqref{exp-pu}, due to symmetry 
we may assume that $j\leq n/2$, so that $1\leq p_j\leq \phi$
and $0\leq 1-u_j\leq 2^{-j}$ by \eqref{exp-p}.
We have
$$
  0< 1-p_j u_j=p_j(1-u_j)+(1-p_j)\leq p_j(1-u_j)\leq \phi 2^{-j},
$$
as required.
\end{proof}

\noindent
{\bf Corollary.}
$\lim\limits_{j\to\infty} p_j^*=\lim\limits_{j\to\infty} u_j^*=1$ and $\lim\limits_{j\to\infty}\lambda_j^*=0$.

\begin{proof}
Indeed, the estimates \eqref{exp-p} for $|1-p_{j,n}|$ are uniform in $n$ provided $n\geq 2j$.  
\end{proof}

{\bf Remark.} The Corollary justifies the formula (15) in \GDM.

\section{Convergence  of the sequence $(C_n)$}
\label{sec:asmin}

\subsection{Constants $C_n$ in terms of trajectory coordinates}
\label{ssec:Cn}

For a fixed $n\geq 1$, we assume, as before, that $\{(p_j,u_j)\}$, $j=0,1,\dots,n$, is the trajectory $\Tra{n}$ of the map $\Phi$
satisfying the boundary conditions \eqref{bcond}.

\begin{lemma}
\label{lem:Cn-traj}
Let $k=\lfloor n/2\rfloor$. The constant \eqref{Cn} can be expressed as follows,
\begin{equation}
\label{Cn-traj}
 C_n= 2\sum_{j=0}^{k-1} (3-2p_j-p_j u_j) +p_k^2.
\end{equation}
\end{lemma}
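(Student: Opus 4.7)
The plan is to first rewrite each summand $L(u_{j-1},u_j)$ of $A_n=g_n(\uu)$ purely in trajectory coordinates, then fold the resulting expression for $C_n=3n-A_n$ about the midpoint $k=\lfloor n/2\rfloor$ by combining the reversibility of Lemma~\ref{lem:id}(b,c) with the telescoping identity of Lemma~\ref{lem:conserv}(c).

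The first step is a one-line simplification. Using $u_j=1/p_{j-1}$ one gets $L(u_{j-1},u_j)=u_j+p_{j-1}+p_{j-1}u_{j-1}$, and then the conservation identity $p_{j-1}+p_{j-1}u_{j-1}=u_j+p_ju_j$ from Lemma~\ref{lem:conserv}(b) collapses this to the remarkably short form
$$L(u_{j-1},u_j)=2u_j+p_ju_j.$$
Hence $C_n=\sum_{j=1}^n(3-2u_j-p_ju_j)$, and the whole proof reduces to an identity among trajectory coordinates.

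Next I split the sum at $j=k$. For the upper half, the reversibility $u_j=p_{n-j}$ of Lemma~\ref{lem:id}(b) (which also gives $p_ju_j=p_{n-j}u_{n-j}$) lets me re-index and convert $\sum_{j>k}(3-2u_j-p_ju_j)$ into a sum of $(3-2p_i-p_iu_i)$ over $i\in\{0,\dots,k-1\}$. For the lower half $\sum_{j=1}^k(3-2u_j-p_ju_j)$, I convert $\sum_{j=1}^k u_j$ into $\sum_{j=0}^{k-1}p_j$ via the telescoping identity of Lemma~\ref{lem:conserv}(c) applied at $\ell=k$,
$$\sum_{j=0}^{k-1}p_j-\sum_{j=1}^k u_j=p_ku_k,$$
and I convert $\sum_{j=1}^k p_ju_j$ into $\sum_{j=0}^{k-1}p_ju_j$ by removing the endpoint $p_ku_k$ (since $p_0u_0=0$). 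Adding the two halves produces $2\sum_{j=0}^{k-1}(3-2p_j-p_ju_j)$ together with leftover midpoint terms that, after telescoping, collapse to $p_ku_k$.

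Finally, Lemma~\ref{lem:id}(c) identifies this residue with $p_k^2$ in both parities: when $n=2k$ we have $p_k=u_k$, so $p_ku_k=p_k^2$; when $n=2k+1$ we have $p_k=u_{k+1}=1$, so $p_k^2=1$ and the odd-case analysis produces an extra summand at index $k+1$ equal to $1-u_k$, which combines with the telescoping residue $u_k$ to give the same $1=p_k^2$. The main obstacle is precisely this parity bookkeeping --- keeping track of the lone midpoint summand in the odd case versus the shared endpoint $j=k$ in the even case, and verifying that Lemma~\ref{lem:conserv}(c) plus Lemma~\ref{lem:id}(c) conspire to produce exactly $p_k^2$ in either situation.
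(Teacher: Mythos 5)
Your proof is correct and follows essentially the same route as the paper's: both express $A_n$ in trajectory coordinates, fold the sum about the midpoint $k=\lfloor n/2\rfloor$ using the reversal symmetry $u_j=p_{n-j}$ of Lemma~\ref{lem:id}(b,c), and absorb the boundary terms via the telescoping identity of Lemma~\ref{lem:conserv}(c), with the same parity case distinction yielding $p_k^2$. The only cosmetic difference is that you first collapse each summand to $2u_j+p_ju_j$ via Lemma~\ref{lem:conserv}(b) and then split the sum asymmetrically, whereas the paper symmetrizes to $\sum_{j=0}^n(p_j+u_j+p_ju_j)$ before folding; both variants check out.
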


\begin{proof} Using Eq.~\eqref{gn} and the relations $1/u_j=p_{j-1}$, we express the constant \eqref{An}
as the value of $g_n(\uu^{(n)})$ on the trajectory $\Tra{n}$,
$$
  A_n=g_n(\uu)=\sum_{j=0}^n (p_j+u_j+p_j u_j).
$$
By symmetry (Lemma~\ref{lem:id}(b)) we have
$$
A_n=2\sum_{j=0}^{k} (p_j+u_j+p_j u_j)-\mu,
$$
where $\mu=0$ for $n$ odd and $\mu=p_k +u_k+p_k u_k$ 
for $n$ even.
Further, using Lemma~\ref{lem:conserv}(c)
and 
Lemma~\ref{lem:id}(c), we get
$$
 A_n=
 2\sum_{j=0}^{k-1} (2p_j+p_j u_j) +\mu'
$$
with $\mu'=2=3-p_k^2$ for $n$ odd and $\mu'=-p_k^2$ for $n$ even. The formula \eqref{Cn-traj} follows.
\end{proof}

\subsection{The existence of Shallit's constant}
\label{ssec:main}

Recall the limit values $p_j^*$, $u_j^*$ defined in \eqref{limits-pu}. Let
\begin{equation}
\label{SN}
 S_N=\sum_{j=0}^N (3-2p^*_j-p^*_j u^*_j).
\end{equation}


\begin{theorem}
\label{thm:Sh-lim}
There exist the limits
$
S=\lim_{N\to\infty} S_N
$
and
$
C=\lim_{n\to\infty} C_n
$.
The limit values are related by
$$
 C=2S+1.
$$
\end{theorem}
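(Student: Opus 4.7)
The plan is to establish absolute convergence of the series \eqref{SN} via a geometric majorant, and then to pass to the limit $n\to\infty$ in the representation of $C_n$ furnished by Lemma~\ref{lem:Cn-traj} by dominated convergence.

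The algebraic rearrangement $3-2p-pu = 2(1-p) + (1-pu)$ together with Lemma~\ref{lem:exp} controls each summand uniformly. For every fixed $j$ and every $n\geq 2j$ one has $\norm{j}=j$, so $|p_{j,n}-1|\leq 2^{-j}$ and $0<1-p_{j,n}u_{j,n}\leq \phi\,2^{-j}$. Letting $n\to\infty$ via \eqref{limits-pu}, the same bounds carry over to $p_j^*$ and $p_j^* u_j^*$, yielding $|3-2p_j^*-p_j^* u_j^*|\leq (2+\phi)\,2^{-j}$. This proves the absolute convergence of $\sum_j(3-2p_j^*-p_j^* u_j^*)$, and hence the existence of $S=\lim_N S_N$.

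Next, I would expand $C_n$ via Lemma~\ref{lem:Cn-traj} and treat the sum $2\sum_{j=0}^{k-1}(3-2p_{j,n}-p_{j,n}u_{j,n})$ and the boundary term $p_{k,n}^2$ separately, with $k=\lfloor n/2\rfloor$. On the summation range $0\leq j\leq k-1$ one has $j\leq n/2-1<n-j$, so $\norm{j}=j$, and the estimates above supply the $n$-independent summable majorant $(2+\phi)\,2^{-j}$. Since $p_{j,n}\to p_j^*$ and $u_{j,n}\to u_j^*$ for every fixed $j$ by \eqref{limits-pu}, dominated convergence for series gives $\sum_{j=0}^{k-1}(3-2p_{j,n}-p_{j,n}u_{j,n})\to S$. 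For the boundary term, $\norm{k}=\lfloor n/2\rfloor\to\infty$, so $|p_{k,n}-1|\to 0$ by Lemma~\ref{lem:exp} and consequently $p_{k,n}^2\to 1$. Combining the two limits yields $C=\lim_n C_n = 2S+1$, as required.

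The whole argument is essentially routine once the geometric bounds of Lemma~\ref{lem:exp} are available; the one point that calls for care is noticing that $\norm{j}$ collapses to $j$ throughout the entire summation range $j\leq k-1<n/2$, since this is precisely what furnishes an $n$-independent summable majorant and thereby legitimizes the dominated convergence step.
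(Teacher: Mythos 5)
Your argument is correct and mirrors the paper's proof essentially step for step: you use the same decomposition $3-2p-pu = 2(1-p)+(1-pu)$, derive the same geometric majorant from Lemma~\ref{lem:exp}, invoke dominated convergence on the truncated sum from Lemma~\ref{lem:Cn-traj}, and dispatch the boundary term via $p_{k,n}^2\to 1$. The only cosmetic difference is that the paper exploits the opposite signs of $1-p_{j,n}$ and $1-p_{j,n}u_{j,n}$ to obtain the slightly sharper majorant $2^{1-j}$, whereas you add absolute values to get $(2+\phi)2^{-j}$; both are summable and serve identically.
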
 

\begin{proof}
Put $h_{j,n}=2(1-p_{j,n})+(1-p_{j,n}q_{j,n})$ for $j<n/2$ and $h_j^*=3-2p_j^*-p_j^*u_j^*$. 
By Lemma~\ref{lem:exp}, taking into account that $1-p_{j,n}<0<1-p_{j,n}q_{j,n}$ and $\phi<2$ we have $|h_{j,n}|<2^{1-j}$.
Passing to the limit as $n\to\infty$, we conclude that $|h_j^*|<2^{1-j}$, hence $\lim_{N\to\infty} S_N$ exists.

Let $k_n=\lfloor n/2\rfloor$. 
Put
$$
  z_{j,n}=\begin{cases}
  h_{j,n}-h_j^*
  \quad \mbox{if $0\leq j<k_n$},
  \\[1ex]
  0\quad \mbox{if $ j\geq k_n$}.
\end{cases}
$$
For every $j$ we have $\lim_{n\to\infty} z_{j,n}=0$ 
and the uniform bound 
$$
  \sum_{j=0}^\infty |z_{j,n}|\leq \sum_{j=0}^{k_n} |h_{j,n}|+\sum_{j=0}^\infty |h_j^*|\leq 8
$$
holds.
By the Dominated Convergence Theorem we obtain 
$$
 \lim_{n\to\infty}\left(\frac{C_n-1}{2}-S_{k_n-1}\right)=\lim_{n\to\infty} \sum_{j=0}^{k_n-1} z_{j,n}=0,
$$
which concludes the proof.
\end{proof}

\subsection{Monotonicity of the sequence $(C_n)$}
\label{ssec:Cn_monot}

Table 1 in \GDM\ clearly displays monotonicity of $C_n$. This property was not
used in the above proof of existence of Shallit's constant; we prove it now. 
 
\begin{prop}
\label{prop:Cn_incr}
The sequence $(C_n)$ is monotone increasing.
\end{prop}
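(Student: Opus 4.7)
The plan is to prove the stronger inequality $A_{n+1}<A_n+3$ by constructing an explicit feasible length-$(n+1)$ test vector from the minimizer $\uu^{(n)}$ via a single insertion. For $0\le k\le n$ and $v>0$, form $\uu'(v)=(u_1,\dots,u_k,v,u_{k+1},\dots,u_n)$, with $v$ appended when $k=n$. From \eqref{gn} one obtains at once $g_{n+1}(0,\uu'(v))=A_n+\Delta_k(v)$, where
$$
  \Delta_k(v)=L(u_k,v)+L(v,u_{k+1})-L(u_k,u_{k+1})
$$
(terms involving $u_{k+1}$ are omitted when $k=n$). Since $A_{n+1}\le A_n+\Delta_k(v)$, the task reduces to producing $k$ and $v$ with $\Delta_k(v)<3$. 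The guiding heuristic is to insert at the unique site where the trajectory crosses the fixed point of $\Phi$, because $L(1,1)=3$ is precisely the marginal cost.

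For \emph{even} $n=2k$, Lemma~\ref{lem:id}(b,c) combined with Lemmas~\ref{lem:subhyperbola}(a) and \ref{lem:nw} yield $u_k<1$ and $u_{k+1}=p_{k-1}>1$, both strict. Substituting $v=1$ then gives
$$
  \Delta_k(1)=2+u_k+\frac{1-u_k}{u_{k+1}},
$$
and the inequality $\Delta_k(1)<3$ is equivalent to $(1-u_k)(u_{k+1}-1)>0$, which holds.

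For \emph{odd} $n=2k-1$, Lemma~\ref{lem:id}(c) gives $u_k=1$, so $\Delta_k(1)=3$ and the choice $v=1$ fails to produce a strict inequality; this is the one step of the argument requiring a little care. I will then insert between $u_k=1$ and $u_{k+1}>1$ (or append when $n=1$, in which case the $u_{k+1}$-dependent terms vanish), and a short calculation gives
$$
  \Delta_k(v)=v+\frac{2}{v}+\frac{v-1}{u_{k+1}}
$$
(with the convention $1/u_{k+1}=0$ at $n=1$). Since $\Delta_k'(1)=-1+1/u_{k+1}<0$, any $v$ slightly larger than $1$ yields $\Delta_k(v)<3$. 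The strict inequality $u_{k+1}>1$, supplied by the monotonicity of Lemma~\ref{lem:nw}, is precisely what makes this first-order perturbation work, and together with the even case it completes the proof of $C_{n+1}>C_n$ for all $n\ge 1$.
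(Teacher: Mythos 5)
Your proof is correct and follows essentially the same route as the paper: both arguments insert a single extra coordinate into the minimizer $\uu^{(n)}$ at the middle of the trajectory and show that the added cost is strictly less than $3$, using Lemmas~\ref{lem:id} and \ref{lem:nw}/\ref{lem:subhyperbola} to get the needed strict inequalities at the insertion site. The only difference is in the choice of the inserted value: the paper optimizes it in closed form, arriving at the uniform criterion $(p_k u_k-1)^2+4(1-p_k)(1-u_k)>0$ for both parities, whereas you take $v=1$ for even $n$ and a first-order perturbation of $v=1$ for odd $n$ --- both variants are valid.
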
 

\begin{proof}
The vector minimizing $g_n(\uu)$ is $\uu^{(n)}=(0,u_{1,n},\dots,u_{n,n})$. As before, we fix $n$ and use shorter notation $u_k=u_{k,n}$;
the corresponding trajectory (Sec.~\ref{ssec:dynsys}) is $\Tra{n}=\{(p_n,u_n)\}$.
In order to show that $C_{n+1}>C_n$, it suffices to find a vector $\uu^\dag\in\RR^{n+2}$ such that 
$u^\dag_0=0$, $\hat\uu^\dag=(u^\dag_1,\dots,u^\dag_{n+1})\in\RR_+^{n+1}$ and $\delta=g_n(\uu^{(n)})+3-g_{n+1}(\uu^\dag)>0$.

Let $k\in\{1,\dots,n-1\}$ and $r>0$ be some parameters to be chosen later. Put
$$
 u^\dag_j=\begin{cases}
 u_j, \quad 1\leq j\leq k,
 \\
 r, \quad j=k+1,
 \\
 u_{j-1}, \quad k+2\leq j\leq n+1.
\end{cases}
$$
Then
$$
\delta=3+\frac{u_{k}}{u_{k+1}}-\frac{u_{k}}{r}-r-\frac{1}{r}-\frac{r}{u_{k+1}}
=3+p_{k}u_{k}-\frac{u_{k}+1}{r}-(p_{k}+1)r.
$$
The right-hand side is maximized at
$$
 r=\sqrt{\frac{1+u_{k}}{1+p_k}}.
$$
This choice of $r$ gives
$$
\delta
=
3+p_{k}u_{k}-2\sqrt{(1+p_{k})(1+u_{k})}.
$$
The inequality to prove, $\delta>0$, is equivalent to the inequality
$$
 (3+p_{k}u_{k})^2>4(1+p_{k})(1+u_{k}),
$$ 
which can be written as
$$
 (p_{k}u_{k}-1)^2+ 4(1-p_{k})(1-u_{k})>0.
$$
Now, if $n$ is even, then we put $k=n/2$, so that $p_{k}=u_{k}$. 
If $n$ is odd, then put $k=(n-1)/2$, so that $p_{k}=1$ and $u_{k}<1$ (Lemma~\ref{lem:id}).
In both cases the required inequality obviuosly holds.
\end{proof}

\medskip\noindent
{\bf Remark.}
The proof implies the estimates
\begin{equation}
\label{lowerbnd_difC}
  C_{n+1}-C_n>\begin{cases}
     (1-u_k)^2,\quad \mbox{\rm $n=2k$},\\
     (1-u_k)^2 \left(\frac18-o(1)\right),\quad \mbox{\rm $n=2k+1$}.
  \end{cases}
\end{equation}
Theorem~\ref{thm:convrates} in Section~\ref{sec:convrates} shows that the relation $C-C_n\asymp (1-u_k)^2$ is correct.

\section{Convergence rates}
\label{sec:convrates}

\subsection{Overview}
\label{ssec:convrates-intro}

In the previous sections we have proved the monotonicity properties of the sequences $(C_n)$ and $(p_{0,n})$ observed in Table~1 of GDM.
Examination of the quantitative experimental information from the same table reveals the asymptotic relations
$$
 p_0^*-p_{0,n}\asymp \rho^{-n},
 \qquad 
 C-C_n\asymp\rho^{-n}.
$$
In addition, Table~2 of GDM clearly suggests that $\lambda_j^*=1-u_j^*\asymp\rho^{-j}$.
These relations are proved here. 

Note that the inequalities in Lemma~\ref{lem:exp} lead to the estimates $ p_0^*-p_{0,n}=O(2^{-n})$ and $\lambda_j-\lambda_j^*=O(2^{-j})$.
With a little additional effort (approximating the map $\Phi$ by its linear part near the fixed point)
it would be not too difficult to replace the base $2$ by $\rho$ in these estimates. 
On the other hand, in order to prove any quantitative upper bound for the difference 
$C-C_n$ one needs to establish uniform estimates for $u_j^*-u_{j,n}$ and $p_j^*-p_{j,n}$ and this seems to be a much harder task. 

The powerful tool that we employ here to achieve all goals at once is Hartman's $C^1$ linearization theorem. In essense, it provides new coordinates
in which the map $\Phi$ becomes linear, while the distortion of distances between the old and new coordinate systems is bounded above and below.

The linearization easily yields all required upper as well as lower estimates for coordinate differences along the trajectories
provided the ``initial'' coordinate in the unstable direction is not vanishingly small.
The latter condition is ensured by the transversality properties proved in Section~\ref{ssec:transvers}.

The formula \eqref{Cn-traj} for $C_n$ used in the proof of Theorem~\ref{thm:Sh-lim} would yield the upper estimate $C-C_n=O(\rho^{-n/2})$.
The corresponding series \eqref{SN} has general ($j$-th) term that goes to zero as $O(\rho^{-j})$. GDM derived a different series representation
for $C$, see GDM, Eq.~(20), where the general term decays as $O(\rho^{-2j})$.  In eq.~\eqref{Cn-quad} of Section~\ref{ssec:Cn-quad} the same pattern is used
to  express the pre-limit constants $C_n$. That expression is good enoug to obtain the estimate $C-C_n=O(n\rho^{-n})$,
which still falls short of the experimental evidence. 
One more step towards convergence acceleration is made in Lemma~\ref{lem:Cn-cube}, where we obtain a representation for $C_n$
with general term that decays as $O(\rho^{-3j})$. (The convergence acceleration discussed in the last part of GDM is different and
does not suit our purposes.)  

After all these preparations, the main theorem is stated and proved in Section~\ref{ssec:convrate}.   

\subsection{Local linearization}
\label{ssec:linearization}

We refer to the theorem of P.~Hartman \cite{Hartman_1960} that asserts that a $C^2$ differeomorphism near a hyperbolic fixed point in two dimensions is $C^1$-conjugate to its linear part. 
(To appreciate a few subtleties around this result --- dimension, smoothness class, resonances --- in a more general context
see  \cite[Sec.~6.6]{KH_1996}.)

  
Specifically, for the our map $\Phi$  there exists a neighborhood
$\Omega$ of the fixed point $P_0=(1,1)$ and a $C^1$ diffeomorphism $h$ of $\Omega$ onto some neighborhood of $(0,0)$ 
such that $h(P_0)=(0,0)$ and
$$
 h\circ\Phi(p,u)=\Psi\circ h(p,u) \quad\mbox{\rm if $(p,u)\in\Omega\cap \Phi^{-1}\Omega$},
$$
where $\Psi$ is a linear map, which we identify with its matrix $\Psi=\mathrm{diag}(\rho^{-1},\rho)$.

Let $(\xi,\eta)=h(p,u)$ be the new coordinates. If $(\xi',\eta')=h(p',u')$, where $(p',u')=\Phi(p,u)$,
then $\xi'=\rho^{-1}\xi$, $\eta'=\rho\eta$. Thus, $\xi$ is the coordinate in the stable direction and $\eta$ --- in the unstable direction.

\medskip\noindent
{\bf Remark. }
A weaker form of linearization, such as, say, a $C^0$ linearization provided by the much widely applicable Grobman-Hartman theorem,
would not suffice for our purposes, since we need the linearizing local homeomorphism to be quasi-isometric, that is,
to change the distances by a factor that is uniformly bounded away from zero and infinity.  

\subsection{Boundary problem for trajectories of the linearized map}
\label{ssec:bvp-linear}

Lemma~\ref{lem:exp} guarantees that there exists $n_0\in\NN$ such that for $n/2\geq j\geq n_0$
we have $(p_{j,n},u_{j,n})\in\Omega$. Put 
$$
 (\xi_{j,n},\eta_{j,n})=h(p_{j,n},u_{j,n}),\qquad n_0\leq j\leq \lfloor n/2\rfloor.
$$
(We do not define and will not need numerical values $\xi_{j,n}$ and $\eta_{j,n}$ with $j\notin\{n_0,\dots,\lfloor n/2\rfloor\}$.)

Let us describe a boundary problem that uniquely determines the trajectory.

Denote by $\Gamma_1$ the image under the map $h\circ\Phi^{n_0}$ of an interval $I_1$ of the real line
containing the point $P_s$ so small that $\Gamma_1\subset h(\Omega)$. 
Denote by $\Gamma'_2$ and $\Gamma''_2$ the images under the map $h$ of intervals $I'_2$ and $I''_2$ of the lines,
respectively, $p=1$ and $p=u$, containing the point $(1,1)$ and so small that $\Gamma_2'\cup\Gamma''_2\subset h(\Omega)$. 

\begin{lemma}
\label{lem:traj-lin} Let $n> 2n_0$.
The point $Q=(\xi_{n_0,n},\eta_{n_0,n})\in h(\Omega)$ is uniquely determined by the following conditions:

 (i) $Q\in \Gamma_1$;
 
(ii) In the case of odd $n=2k+1$, $\Psi^{k-n_0}(Q)=(\xi\rho^{n_0-k},\eta\rho^{k-n_0})\in\Gamma'_2$, 
and in the case of even $n=2k$, $\Psi^{k-n_0}(Q)\in\Gamma''_2$.  
\end{lemma}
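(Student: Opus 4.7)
The plan is to treat existence and uniqueness of $Q$ separately. Existence is a direct verification of (i) and (ii) for the canonical choice $Q=h(p_{n_0,n},u_{n_0,n})$, while uniqueness will be reduced to the strict monotonicity of the rational functions $P_k$ and $U_k$ from Lemma~\ref{lem:incr}(a), combined with the symmetry identities of Lemma~\ref{lem:id}(c).

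For existence, I would first invoke $p_{0,n}\uparrow p_0^*$ from \eqref{limits-pu} to see that $(p_{0,n},0)\in I_1$ for every $n>2n_0$, provided $I_1$ is chosen as a suitable neighborhood of $P_s$. Then $Q=h\circ\Phi^{n_0}(p_{0,n},0)\in\Gamma_1$, which gives (i). For (ii), the linearizing conjugacy $h\circ\Phi=\Psi\circ h$ iterated $k-n_0$ times yields $\Psi^{k-n_0}(Q)=h(p_{k,n},u_{k,n})$. By Lemma~\ref{lem:id}(c), the point $(p_{k,n},u_{k,n})$ lies on the line $p=1$ when $n=2k+1$ is odd and on the line $p=u$ when $n=2k$ is even; Lemma~\ref{lem:exp} and its corollary moreover place it in an arbitrarily small neighborhood of $P_0$ for all $n>2n_0$. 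Choosing $I_2'$ and $I_2''$ to contain such a neighborhood then gives $\Psi^{k-n_0}(Q)\in\Gamma_2'$ or $\Gamma_2''$ respectively.

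For uniqueness, suppose $Q^*\in h(\Omega)$ also satisfies (i)--(ii). By (i), $Q^*=h\circ\Phi^{n_0}(p,0)$ for some $p>0$ with $(p,0)\in I_1$. Applying $h^{-1}$ to (ii) and using the conjugacy, $\Phi^k(p,0)=(P_k(p),U_k(p))$ lies on $\{p=1\}$ for odd $n$, so $P_k(p)=1$, and on $\{p=u\}$ for even $n$, so $P_k(p)=U_k(p)$. Since $P_k$ is strictly increasing on $\RR_+$ by Lemma~\ref{lem:incr}(a), the first equation has at most one positive solution; since $U_k$ is strictly decreasing on the same domain, $P_k-U_k$ is strictly increasing and the second equation again has at most one positive solution. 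In either case the unique root coincides with $p_{0,n}$, so $Q^*=Q$.

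The principal obstacle is the compatibility between the requirement that $I_1,I_2',I_2''$ be small enough so that $\Gamma_1,\Gamma_2',\Gamma_2''\subset h(\Omega)$ and the requirement that they be large enough to contain the relevant $n$-dependent trajectory points uniformly in $n>2n_0$. This is resolved by first fixing $n_0$ large enough that Lemma~\ref{lem:exp} forces all points $(p_{n_0,n},u_{n_0,n})$ and $(p_{k,n},u_{k,n})$ into a small common neighborhood of $P_0$, and $(p_{0,n},0)$ into a small common neighborhood of $P_s$; the three intervals can then be chosen just large enough to absorb these uniformly bounded fluctuations.
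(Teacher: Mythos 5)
Your proposal is correct and follows essentially the same route as the paper: transport the conditions back to $(p,u)$-coordinates via $(h\circ\Phi^{n_0})^{-1}$ and reduce them to the boundary conditions \eqref{bcondk}. The paper is terser on the uniqueness step, implicitly relying on Lemma~\ref{lem:incr}(a) and Lemma~\ref{lem:id}(c); you have spelled that step out explicitly via the monotonicity of $P_k$ and $P_k-U_k$.
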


\begin{proof}
The initial point $Q^\#=(p_{0,n},u_{0,n})=(h\circ\Phi^{n_0})^{-1}(Q)$ of the of the trajectory $\{(p_{j,n},q_{j,n})\}$ is uniquely determined by
the conditions:

(i) $Q^\#$ lies on the semiaxis $u=0$, $p>0$;

(ii) In the case of odd $n=2k+1$, $\Psi^{k}(Q^\#)$ lies on the line $p=1$; 
in the case of even $n=2k$, $\Psi^{k}(Q^\#)$ lies on the line $p=u$. (Recall Lemma~\ref{lem:id}(c).)

The boundary conditions stated in the Lemma are equivalent to these in view of the definitions of the curves $\Gamma_1$, $\Gamma_2'$, $\Gamma_2''$. 
\end{proof}

\subsection{Invariant curves}
\label{ssec:invcurves}

We may assume for simplicity that the neighborhood $h(\Omega)$ of $(0,0)$ of Hartman's theorem is a square $|\xi|<\eps$, $|\eta|<\eps$.
The rectilinear interval $\gamma_s^\#=\{(\xi,\eta)\,|\,\eta=0, |\xi|<\eps\}$ is the stable invariant curve in the sense that 
$\Psi^j(Q)\to (0,0)$ as $j\to\infty$ for every $Q\in\gamma_s^\#$. Similarly, the interval $\gamma_u^\#=\{(\xi,\eta)\,|\,\xi=0, |\eta|<\eps\}$ is the 
unstable invariant curve in the sense that $\Psi^{-j}(Q)\to (0,0)$ as $j\to\infty$ for every $Q\in\gamma_u^\#$.

We {\it define}\ (a relevant part of) the invariant curve $\gamma_s$ in the $(p,u)$ plane as the image $(\Phi^{n_0}\circ h)^{-1}(\gamma_s^\#)$;
more precisely; we are only interested in that part of the image where $u>0$ and $p>0$. 
The notation $\gamma_s$ will henceforth apply to the said part. 

The following lemma is intuitively obvious; yet we state it explicitly.

\begin{lemma}
\label{lem:pt-on-invcurv}
The points $(p_j^*,u_j^*)$, $j=0,1,2,\dots$, defined in \eqref{limits-pu}  belong to $\gamma_s$.
\end{lemma}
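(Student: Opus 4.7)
The plan is to show that $\{(p_j^*,u_j^*)\}_{j\geq 0}$ is a forward $\Phi$-orbit converging to $P_0$, and then to apply Hartman's linearization: in the conjugated coordinates $(\xi,\eta)$ any orbit that tends to the origin must lie entirely on the locus $\eta=0$, which is precisely $\gamma_s^\#$.

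Step one is to verify that the limit sequence is genuinely a $\Phi$-trajectory. Passing to the limit $n\to\infty$ in the recursion $(p_{j+1,n},u_{j+1,n})=\Phi(p_{j,n},u_{j,n})$ yields $(p_{j+1}^*,u_{j+1}^*)=\Phi(p_j^*,u_j^*)$, the limit operation being legitimate because $\Phi$ is continuous at $(p_j^*,u_j^*)$: indeed $p_j^*\geq 1$ (take $n\to\infty$ in the inequality of Lemma~\ref{lem:subhyperbola}(b), which is valid for every $n>2j$), so the singular locus $p=0$ of $\Phi$ is avoided. Combined with the Corollary at the end of Section~\ref{ssec:exp-estim}, asserting $(p_j^*,u_j^*)\to P_0$, this produces a $\Phi$-orbit tending to the fixed point.

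Step two uses Hartman's theorem. For every sufficiently large $j$ — say $j\geq j_1$ for some $j_1\geq n_0$ — the point $(p_j^*,u_j^*)$ lies in $\Omega$, so $(\xi_j,\eta_j):=h(p_j^*,u_j^*)$ is defined and the conjugacy $h\circ\Phi=\Psi\circ h$ forces the scalar recursions $\xi_{j+1}=\rho^{-1}\xi_j$ and $\eta_{j+1}=\rho\,\eta_j$. Iterating the second gives $\eta_j=\rho^{j-j_1}\eta_{j_1}$. On the other hand, continuity of $h$ and $h(P_0)=(0,0)$ imply $\eta_j\to 0$, and since $\rho>1$ this is possible only if $\eta_{j_1}=0$; consequently $\eta_j=0$ and thus $h(p_j^*,u_j^*)\in\gamma_s^\#$ for every $j\geq j_1$.

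Finally, possibly enlarging $n_0$ so that $n_0\geq j_1$ (nothing in the construction of Section~\ref{ssec:linearization} prevents this), the identity $\Phi^{n_0}(p_j^*,u_j^*)=(p_{j+n_0}^*,u_{j+n_0}^*)$ combined with the previous step shows $h(\Phi^{n_0}(p_j^*,u_j^*))\in\gamma_s^\#$ for every $j\geq 0$, which by the definition of $\gamma_s$ is exactly the assertion $(p_j^*,u_j^*)\in\gamma_s$. I expect the only (minor) obstacle to be the bookkeeping around step one and the eventual enlargement of $n_0$; the conceptual content of the lemma is the unsurprising fact that a $\Phi$-orbit attracted to a hyperbolic fixed point is forced to lie on its stable manifold.
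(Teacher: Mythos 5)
Your proposal is correct and follows essentially the same route as the paper: pass to the limit to see that $(p_j^*,u_j^*)$ is a $\Phi$-orbit converging to $P_0$, then use the Hartman conjugacy to conclude that its $\eta$-coordinate must vanish, which is the characteristic property of $\gamma_s^\#$. You merely spell out the bookkeeping (continuity of $\Phi$ away from $p=0$, the forced vanishing of $\eta_{j_1}$, and the shift by $\Phi^{n_0}$ in the definition of $\gamma_s$) that the paper leaves implicit.
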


\begin{proof}
By Lemma~\ref{lem:exp} and its corollary we have $(p^*_{j},u^*_{j})\to P_0$ as $j\to\infty$.
By continuity,  $(p^*_{j},u^*_{j})=\Phi^j(p^*_0,u^*_0)$, so every point $Q=h(p^*_n,u^*_n)$,
$j\geq n_0$, satisfies the characteristic property $\lim_{j\to\infty} \Psi^j (Q)=(0,0)$ of the points on the stable curve
in $(\xi,\eta)$ coordinates. The claim of Lemma follows.   
\end{proof}

\medskip\noindent
{\bf Remarks. }
1. By construction, the curve $\gamma_s$ is at least $C^1$. In particular, it has a tangent
at the point $P_s$ where $\gamma_s$ meets the $p$-axis. 

\smallskip
2. We will not need the other parts of the stable invariant curve in the $(p,u)$ coordinates nor the unstable curve.
The maximal continuous extensions of the invariant curves
can be defined by starting from the neigborhood $\Omega$ of the fixed point and iterating the map $\Phi$ or $\Phi^{-1}$ 
while the newly obtained parts of the curves lie in the quadrant $p>0$, $u>-1$ for the stable curve and $u>0$, $p>-1$ for the unstable curve, cf.\ Fig.~\ref{fig:invcurves}.

\smallskip
3. Alternatively, (the maximal continuous extensions of) the invariant curves can be defined as follows. Consider the stable curve;
the unstable one is obtained by the coordinate swap. Denote $D_0=\{(p,u)\,|\,p>0, u>-1\}$. Define recurrently
$D_n=\Phi^{-1}(D_{n-1})\cap D_0$.  For example, $D_1=\{(p,u)\,|\,p>0, u>p^{-2}-1\}$. Clearly, $D_0\supset D_1\supset D_2\supset\dots$.
Let $D_{\infty}=\cap_{n=1}^{\infty} D_n$. The boundary of $D_\infty$ is the stable invariant curve. (We omit the proof.)

\smallskip
4. It is visually obvious that the domains $D_n$ are convex. We prove this observation in the next section.
It can be skipped, since the result is not used in the sequel.

\subsection{Convexity of the domains $D_n$}
\label{ssec:conv_Dn}

\begin{prop}
\label{prop:conv_Dn}
  For every $n=0,1,2,\dots$ the domain $D_n$ defined in Remark 3 of Section~\ref{ssec:invcurves} is convex.
\end{prop}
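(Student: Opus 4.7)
The plan is to proceed by induction on $n$, using the representation $D_n = D_{n-1} \cap \Phi^{-n}(D_0)$ recorded in Remark~3 of Section~\ref{ssec:invcurves}. The base case $D_0 = \{p>0\} \cap \{u>-1\}$ is the intersection of two open half-planes, hence convex. For the inductive step, assume $D_{n-1}$ is convex. On $D_{n-1}$ all iterates $p_j(p,u) = (\Phi^j(p,u))_1$, $0\le j\le n-1$, are strictly positive, so $u_n = 1/p_{n-1} > 0$ automatically, and the condition $\Phi^n(p,u) \in D_0$ reduces to the single inequality $p_n(p,u) > 0$. Therefore it suffices to show that $\{p_n > 0\} \cap D_{n-1}$ is convex.

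I would realize the boundary piece $\gamma_n = \{p_n = 0\} \cap D_{n-1}$ as the graph of a function $u = g_n(p)$; this is legitimate because $\partial_u p_n > 0$ throughout $D_{n-1}$, which follows by an induction analogous to Lemma~\ref{lem:incr}(a) from the recurrence $p_n + 1 = p_{n-1}^2(p_{n-2}+1)/p_{n-2}$ ($n \ge 2$). Then $\{p_n > 0\} \cap D_{n-1}$ is the epigraph of $g_n$, and convexity of the set is equivalent to $g_n'' \ge 0$. Implicit differentiation of $p_n(p,g_n(p)) = 0$ yields
\[
  g_n''(p) \;=\; -\frac{\Theta_n(p,g_n(p))}{(\partial_u p_n)^3},
  \qquad
  \Theta_n = (\partial_u p_n)^2 \partial_{pp}p_n - 2(\partial_p p_n)(\partial_u p_n)\partial_{pu}p_n + (\partial_p p_n)^2 \partial_{uu}p_n.
\]
Since $\partial_u p_n > 0$, the task reduces to proving that the tangent Hessian satisfies $\Theta_n \le 0$ along $\gamma_n$.

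The main obstacle is propagating this sign through the iteration. One route is to differentiate the recurrence for $p_n$ once and twice in $(p,u)$, obtaining linear and respectively inhomogeneous linear recurrences for $\nabla p_n$ and $\nabla^2 p_n$, and then to induct on $n$ to establish $\Theta_n \le 0$. A structurally cleaner alternative uses the reversibility identity $\Phi^{-1} = \sigma\circ\Phi\circ\sigma$ from Lemma~\ref{lem:id}(a), which gives $\gamma_n = \sigma(\Phi^n(\{u=0\})) = \{(U_n(t),P_n(t)) : t>0\}$ in the notation of Lemma~\ref{lem:incr}; since $U_n$ is decreasing and $P_n$ is increasing in $t$ (Lemma~\ref{lem:incr}(a)), convexity of $g_n$ is equivalent to the Wronskian-type inequality $P_n'(t)U_n''(t) - P_n''(t)U_n'(t) \ge 0$ for all relevant $t>0$ (a direct check in the case $n=1$ gives $6/t^2$, consistent with $g_1(p) = p^{-2}-1$ being convex). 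The pairs $(P_n',U_n')$ evolve along the orbit under $D\Phi$ and $(P_n'',U_n'')$ evolve under $D\Phi$ with quadratic forcing by $D^2\Phi$; the identity $\det D\Phi \equiv 1$ (area preservation) should supply the key cancellation needed for the induction to close.
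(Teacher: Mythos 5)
Your ``structurally cleaner alternative'' is in fact exactly the paper's route: after using the reversibility $\Phi^{-1}=\sigma\circ\Phi\circ\sigma$ to write the new boundary piece of $D_n$ as $t\mapsto(U_n(t),P_n(t))$, the paper also reduces everything to the single Wronskian-type inequality $\dot P_n\ddot U_n-\ddot P_n\dot U_n>0$, and your sanity check $n=1\Rightarrow 6/t^2$ matches the implicit base case. Your first route (graph $u=g_n(p)$, sign of $\Theta_n$) is the same content in different coordinates; it is sound but adds nothing.

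The genuine gap is that you never close the induction, and the mechanism you conjecture is only half the story. Differentiating the recurrences $U_{n+1}=1/P_n$ and $P_{n+1}=P_n^2(U_n+1)-1$ twice and forming $W_n:=\dot P_n\ddot U_n-\ddot P_n\dot U_n$ gives, after the cancellation,
\[
  W_{n+1}=W_n+\frac{6(\dot P_n)^2}{P_n^2}\Bigl(P_n\dot U_n+(U_n+1)\dot P_n\Bigr).
\]
You are right that area preservation ($\det D\Phi\equiv1$) is what makes $W_n$ reappear with coefficient $1$: writing $v_n=(\dot P_n,\dot U_n)$, $w_n=(\ddot P_n,\ddot U_n)$, one has $v_{n+1}=A_nv_n$ and $w_{n+1}=A_nw_n+B_n(v_n,v_n)$, so $W_{n+1}=\det(A_n)W_n+\det(A_nv_n,B_n(v_n,v_n))$. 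But that alone does not close the induction: you still must show the forcing term is nonnegative, i.e.\ $P_n\dot U_n+(U_n+1)\dot P_n=(P_nU_n)\dot{\phantom{|}}+\dot P_n>0$. That is precisely the monotonicity statement proved earlier as Lemma~\ref{lem:incr}(a) (both $P_n$ and $P_nU_n$ increasing in $t$), and it is the second, indispensable ingredient your proposal does not identify. Without it your argument is a correct reduction plus an unverified sign claim.
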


\begin{proof}
Using the notation of Section~\ref{ssec:mon-init},
we will prove that $\ddot U_n(t) \dot P_n(t)-\dot U_n(t)\ddot P_n(t)>0$ if $n\geq 1$.
(The dot stands for $d/dt$.)
For $n=0$, the inequality is replaced by equality. 
Proceeding by induction, we have
$$
\ba{l}
 \dst
 \dot U_{n+1}=-\frac{\dot P_{n}}{P_n^2},
 \qquad \dst 
 \ddot U_{n+1}=\frac{2(\dot P_n)^2-P_n\ddot P_n}{P_n^3},
 \\[2.5ex]
 \dot P_{n+1}=P_n^2\dot U_n+2P_n(U_n+1)\dot P_n,
 \\[1ex]
 \ddot P_{n+1}=4 P_n \dot P_n\dot U_n+P_n^2\ddot U_n+2(U_n+1)(\dot P_n)^2+2P_n(U_n+1)\ddot P_n.
\ea
$$
Hence
$$
\ddot U_{n+1} \dot P_{n+1}-\dot U_{n+1}\ddot P_{n+1}
=
(\ddot U_n \dot P_n-\dot U_n\ddot P_n)
+\frac{6(\dot P_n)^2}{P_n^2} 
(P_n\dot U_n+(U_n+1)\dot P_n).
$$
By Lemma~\ref{lem:incr}(a), 
$(P_n U_n)\dot{\phantom{|}}+\dot P_n>0$,
so the induction step is complete.
\end{proof}

\subsection{Transversality lemma}
\label{ssec:transvers}

Recall (see Sec.~\ref{ssec:bvp-linear}) that $\gamma_s^\#$ denotes the stable curve in the $(\xi,\eta)$ coordinates and
$\Gamma_1$ is the curve containing the points $(\xi_{n,n_0},\eta_{n,n_0})$, which converge as $n\to\infty$ to $P_s^\#=(\xi^*_{n_0},0)$. 

\begin{lemma}
\label{lem:transversal}
The curves $\Gamma_1$ and $\gamma_s^\#$ meet at the point $P_s^\#$ transversally.
\end{lemma}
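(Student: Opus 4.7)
My plan is to argue by contradiction. Assume $\Gamma_1$ and $\gamma_s^\#$ are tangent at $P_s^\#$. Since $h\circ\Phi^{n_0}$ is a $C^1$ diffeomorphism on a neighborhood of $P_s$ that takes (a local piece of) $\gamma_s$ to $\gamma_s^\#$ and $I_1$ to $\Gamma_1$, pulling back shows that the tangent vector $(1,0)$ to $I_1$ at $P_s$ is also tangent to $\gamma_s$ at $P_s$. By the $\Phi$-invariance of $\gamma_s$ and the chain rule, the iterated images $(A_j, B_j) := D\Phi^j(P_s)(1,0)^\top = (\dot P_j(p_0^*), \dot U_j(p_0^*))$ are then tangent to $\gamma_s$ at $(p_j^*, u_j^*)$. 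Since $A_j > 0$ by Lemma~\ref{lem:incr}(a), the slope $m_j := B_j/A_j$ is well defined, and applying $D\Phi(p,u)$ to $(1, m)^\top$ yields the recurrence
$$
m_{j+1} = -\frac{1}{D_j}, \qquad D_j := 2(p_j^*)^3(u_j^*+1) + (p_j^*)^4 m_j.
$$

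The key step is to prove inductively that $|m_j| < 1$ for all $j \geq 0$, starting from $m_0 = 0$. Setting $E_j := 2(p_j^*)^3(u_j^*+1) - (p_j^*)^4$, one has the identity $D_j = E_j + (p_j^*)^4(1 + m_j)$, which exceeds $E_j$ whenever $m_j > -1$. So it suffices to show $E_j > 1$ uniformly. Using Lemmas~\ref{lem:nw} and \ref{lem:bdd}, together with the strict inequality $p_0^* > 1$ (which follows from $p_{0,n} > p_{0,1} = 1$ for $n \geq 2$ via Lemma~\ref{lem:incr}(c)), one factors $E_j = (p_j^*)^3 [2(u_j^*+1) - p_j^*]$. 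For $j \geq 1$, the bounds $p_j^* \in [1, p_0^*]$ and $u_j^* \geq 1/p_0^*$, combined with the fact that $x \mapsto 2/x + 2 - x$ is strictly decreasing on $(1, \phi]$ with value $\phi$ at $x = \phi$, give $2(u_j^*+1) - p_j^* \geq \phi$, hence $E_j \geq \phi > 1$. For $j = 0$, $u_0^* = 0$ gives $E_0 = (p_0^*)^3(2 - p_0^*)$; a short study of $x^3(2-x)$ on $(1, \phi]$ shows it exceeds $1$ strictly. Thus $E_j > 1$ for every $j$, so $|m_{j+1}| = 1/D_j < 1$ and the induction closes.

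The contradiction is then immediate: since $\gamma_s$ is $C^1$ at the fixed point $P_0 = (1,1)$ and tangent there to the stable eigenvector $(1, -\rho)$ of $D\Phi(P_0)$ (by the standard stable manifold theorem, cited in Section~\ref{ssec:overview}), and $(p_j^*, u_j^*) \to P_0$ by the Corollary to Lemma~\ref{lem:exp}, the slope tangent to $\gamma_s$ at $(p_j^*, u_j^*)$ must tend to $-\rho$; but $\rho > 1$ contradicts $|m_j| < 1$. Hence the tangency assumption fails, and $\Gamma_1$ meets $\gamma_s^\#$ transversally at $P_s^\#$. The main obstacle is the uniform lower bound $E_j > 1$: the overall structure of the induction is clean, but one must carefully separate the boundary case $j = 0$ (where $u_0^* = 0$) from the generic case $j \geq 1$, and extract the strict inequality $p_0^* > 1$ from the monotonicity in $n$ established earlier.
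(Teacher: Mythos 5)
Your proof is correct, and it takes a genuinely different route from the paper's. The paper argues directly: it parameterizes $\gamma_s$ near $P_s$ by $t$, sets $(p_j(t),u_j(t))=\Phi^j(p_0(t),u_0(t))$, observes that for large $j$ the signs $\dot p_j<0$, $\dot u_j>0$, $(p_ju_j)\dot{\,}>0$ are forced by approach along the stable direction $(1,-\rho)$, and then runs a \emph{backward} induction (from large $j$ down to $j=0$) to propagate these signs, finally applying the conservation identity of Lemma~\ref{lem:conserv} in differentiated form to get the quantitative bound $-\sigma>1/p_0^*$. You instead argue by contradiction with a \emph{forward} induction: assume the tangent at $P_s$ is $(1,0)$, track the slope $m_j$ of $D\Phi^j(P_s)(1,0)^T$ via the explicit recurrence, and show $|m_j|<1$ uniformly using the bound $E_j=(p_j^*)^3[2(u_j^*+1)-p_j^*]>1$ (which itself rests on the limit-trajectory bounds $1\le p_j^*\le p_0^*<\phi$, $u_j^*\ge 1/p_0^*$ for $j\ge 1$, and the strict inequality $p_0^*>1$ from Lemma~\ref{lem:incr}(c)); the contradiction comes from the slope having to tend to $-\rho<-1$. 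Both proofs lean on the $C^1$-ness of $\gamma_s$ and the stable eigendirection $(1,-\rho)$, but in dual ways: the paper uses it to seed its backward induction, you use it to close the contradiction. Your route trades the paper's conservation identity for an explicit quantitative estimate; the paper's argument is shorter and additionally produces a numerical lower bound on $|\sigma|$, which you don't obtain. One small simplification: for $j=0$ you have $m_0=0$, so $D_0=2(p_0^*)^3>2$ directly and the separate study of $g(x)=x^3(2-x)$ on $(1,\phi]$ is not actually needed; the case analysis on $j$ can be avoided.
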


\begin{proof}
1. By applying the nondegenerate map $(h\circ \Phi^{n_0})^{-1}$, the problem reduces to showing that in the $(p,u)$-plane the curve
$\gamma_s$ meets the axis $u=0$ at $P_s=(p_0^*,0)$ transversally. 
Since $\gamma_s$ is a $C^1$ curve, it has a slope $\sigma$ at $P_0$ (possibly infinite).  
Our task is to show that $\sigma\neq 0$.

A general point of $\gamma_s$ near $P_s$ can be written in the form
$(p_0(t),u_0(t))=(p^*_0+\dot p_0 t+o(t), \dot u_0 t+o(t))$. Then $\sigma=\dot u_0/\dot p_0|_{t=0}$.

Let $(p_j(t),u_j(t))=\Phi^j(p_0(t),u_0(t))$. We omit the explicit argument $t$ in the rest of the proof.
The derivatives will always be evaluated at $t=0$. 

\smallskip
2. For large values of $j$ we have $\dot u_j/\dot p_j<0$, since the curve $\gamma_s$ approaches the point $P_s$
along the stable direction $(1,-\rho)$. To be definite, let us assume that $\dot p_j<0$ and $\dot u_j>0$.
Since $\rho>1$, we also have $(p_j u_j)\dot{\,}>0$.

By backward induction we will show that the same signs of the $t$-derivatives take place for all $j\geq 0$.
We use Lemma~\ref{lem:conserv}(a), Lemma~\ref{lem:id}(a), and argue similarly to the proof of Lemma~\ref{lem:incr}(a).
The induction step consists in proving that the conditions $\dot p_j<0$, $\dot u_j>0$, $(p_j u_j)\dot{\,}>0$
imply $\dot p_{j-1}<0$, $\dot u_{j-1}>0$ and $(p_{j-1} u_{j-1})\dot{\,}>0$. And indeed, this follows from the identities
$$
\begin{array}{c}\dst
 \dot p_{j-1}=-u_{j}^{-2} \dot u_{j},
 \qquad
 (p_{j-1}u_{j-1})\dot{\,}=(p_j u_j)\dot{\,}+\dot u_j-\dot p_{j-1},
 \\[1ex]\dst
 \dot u_{j-1}=u_j\left[(u_j p_j)\dot{\,}+\dot u_j(p_j+2)\right].
\end{array}
$$

\smallskip
3. Similarly to Lemma~\ref{lem:conserv} we have an identity for the trajectories used in this proof: for any $\ell\geq 1$
$$
  p_\ell u_\ell-p_0 u_0=\sum_{j=1}^\ell (p_{j-1}-u_j).
$$
Differentiating and setting $t=0$ we obtain, since $u_0(0)=0$,
\begin{equation}
\label{pudot}
 p_0^* \dot u_0=-\dot p_0+(p_\ell u_\ell)\dot{\,}+\sum_{j=1}^\ell (\dot u_j-\dot p_j).
\end{equation}
From the inequalities proved above we deduce
$$
 -\sigma=\frac{\dot u_0}{|\dot p_0|}> \frac{1}{p_0^*}.
$$
The lemma is proved.
\end{proof}

\medskip
\noindent
{\bf Remark. } 
Passing to the limit as $\ell\to\infty$ in the identity \eqref{pudot} and taking into account that $(p_\ell u_\ell)\dot{\,}\to 0$ 
(we leave the proof to the reader)  we obtain the formula
$$
  \sigma=-\frac{1}{p^*_0}\left(1+\sum_{j=1}^\infty \frac{\dot p_j-\dot u_j}{\dot p_0}\right)=-1.13\dots.
$$ 
This formula can be considered as an equation for $\sigma$ to be solved by iterations, 
given that the values $\dot p_0=-1$ (choice for convenience) and $\dot u_0=-\sigma$ determine all subsequent $\dot p_j$, $\dot u_j$.

\subsection{Sums for $C_n$ with rapidly decreasing terms}
\label{ssec:Cn-quad}

Recall the deviations $\lambda_{j,n}=1-u_{j,n}$ introduced in \eqref{dev}. We fix the trajectory $\Tra{n}$ and omit the index $n$ in the parameters $p_j$, $u_j$
and $\lambda_j$. 

\begin{lemma}
\label{lem:Cn-quad}
Let $k=\lfloor n/2\rfloor$. The expression \eqref{Cn-traj} for the constant $C_n$ can be transformed as follows:
\begin{equation}
\label{Cn-quad}
\begin{array}{rcl}
  C_n&=&
 \dst
 2\sum_{j=0}^{k-1} (3-p_j+u_j-2 u_{j+1}-p_j u_j)+\delta
\\[2ex]
   &=& 
  \dst 
  2\sum_{j=1}^{k} \frac{\lambda_{j}(\lambda_{j-1}-2\lambda_{j})}{u_{j}}+\delta,
\end{array}
\end{equation}
where $\delta=1-(p_k-1)^2$.
\end{lemma}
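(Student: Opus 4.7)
The plan is to establish the two equalities in \eqref{Cn-quad} independently, starting from the formula
$C_n = 2\sum_{j=0}^{k-1}(3-2p_j-p_j u_j) + p_k^2$ of Lemma~\ref{lem:Cn-traj}.

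For the first equality, I subtract the right-hand side of Lemma~\ref{lem:Cn-traj} from the first line of \eqref{Cn-quad}. Per summand the difference is $p_j+u_j-2u_{j+1}$, while the boundary term changes from $p_k^2$ to $1-(p_k-1)^2 = 2p_k - p_k^2$. Thus it suffices to show that
\[
2\sum_{j=0}^{k-1}(p_j+u_j-2u_{j+1}) + 2p_k - 2p_k^2 = 0.
\]
I would split the sum into $\sum_{j=0}^{k-1}(p_j-u_{j+1})$, which after the shift $m=j+1$ equals $p_k u_k$ by Lemma~\ref{lem:conserv}(c) with $\ell=k$, and $\sum_{j=0}^{k-1}(u_j-u_{j+1}) = -u_k$ (telescoping, using $u_0=0$). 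Substituting, the left-hand side collapses to $2(p_k-1)(u_k-p_k)$, which vanishes in both parities by Lemma~\ref{lem:id}(c): for even $n=2k$ one has $u_k=p_k$, and for odd $n=2k+1$ one has $p_k=1$.

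For the second equality, I would reindex $j\mapsto j-1$ so the sum runs over $j=1,\dots,k$; the summand becomes $T_j = 3 - p_{j-1} + u_{j-1} - 2u_j - p_{j-1}u_{j-1}$. Using $p_{j-1}=1/u_j$ (which is immediate from the definition of $\Phi$), rewrite $T_j = 3 - (1+u_{j-1})/u_j + u_{j-1} - 2u_j$. Substituting $u_i = 1-\lambda_i$ and multiplying through by $u_j = 1-\lambda_j$, the numerator becomes
$(2-\lambda_{j-1}+2\lambda_j)(1-\lambda_j) - (2-\lambda_{j-1})$,
which expands to $\lambda_j(\lambda_{j-1}-2\lambda_j)$. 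Hence $T_j = \lambda_j(\lambda_{j-1}-2\lambda_j)/u_j$, as claimed.

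I anticipate no genuine obstacle. The first equality rests on the parity-specific identities of Lemma~\ref{lem:id}(c) being exactly what is needed to annihilate the boundary discrepancy $(p_k-1)(u_k-p_k)$. The second is routine polynomial arithmetic in the $\lambda$ variables, made possible by the single relation $p_{j-1}u_j=1$, which lets us eliminate $p_{j-1}$ entirely from the summand without invoking the full recurrence \eqref{rr-lambda}.
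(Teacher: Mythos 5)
Your proof is correct and takes essentially the same route as the paper: the first equality is established via the telescoping identity for $u_j-u_{j+1}$ and Lemma~\ref{lem:conserv}(c) with $\ell=k$, with the boundary term $(p_k-1)(u_k-p_k)$ vanishing by Lemma~\ref{lem:id}(c); the second equality follows from the substitution $p_{j-1}=1/u_j$ and routine algebra in the $\lambda$ variables. The only cosmetic difference is that you subtract and factor the residual boundary term explicitly, whereas the paper adds the two auxiliary identities to \eqref{Cn-traj} and then simplifies $\delta$ parity by parity.
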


\begin{proof} 
The first line in the formula \eqref{Cn-quad} is obtained by adding together the identity \eqref{Cn-traj},
the telescopic identity
$$
 2\sum_{j=0}^{k-1} (u_j-u_{j+1})+2u_{k}=0,
$$
and the identity of Lemma~\ref{lem:conserv}(c) with $\ell=k$,
$$
 2\sum_{j=0}^{k-1} (p_j-u_{j+1})-2p_k u_{k}=0.
$$
The term outside the summation sign in the resulting expression is $\delta=p_k^2-2p_k u_k +2 u_k$. 
If $n$ is even, then $u_k=p_k$, and if $n$ is odd, then $p_k=1$; in both cases the formula $\delta=1-(p_k-1)^2$ is valid.

\smallskip
To obtain the  second  line in \eqref{Cn-quad}, we substitute $p_j=1/u_{j+1}$, which yields 
$$
 3-\frac{1}{u_{j+1}}+u_j-2u_{j+1}-\frac{u_j}{u_{j+1}}=\frac{(u_{j+1}-1)(u_j-2u_{j+1}+1)}{u_{j+1}}.
$$
In the final expression the index shift $j+1\mapsto j$ is made. 
%
\end{proof}

The formula \eqref{Cn-traj} written in terms of the deviations becomes
$$
  C_n=2\sum_{j=1}^{k} \frac{\lambda_{j-1}-3\lambda_{j}}{1-\lambda_{j}}+p_k^2.
  \eqno(\ref{Cn-traj}')
$$
We think of $\lambda_{j}$ as of small quantities (for large $j$), and here the terms decay linearly in $\lambda_{j}$, while in
in the formula \eqref{Cn-quad} they decay as $\lambda_{j}^2$. In the proof of Theorem~\ref{thm:convrates} even this rate of decay will not quite
suffice; we need a cubic decay in $\lambda_{j}$.  We will use the appropriate version of the $O$-notation in order to avoid cumbersome 
explicit formulas in the next lemma and its proof.

If $f(x_1,x_2,x_3)$ is a rational function such that 
$f(x_1,x_2,x_3)\leq \const\cdot \max_{i=1,2,3}|x|_i^d$ for $0\leq x_1\leq 1$, $0\leq x_2\leq \lambda_1^*$ and $0\leq x_3\leq \lambda_2^*$,    
then we will write $f(\vec{\lambda_j})=O(\lambda_{j\pm})$, where $\vec{\lambda_j}=(\lambda_{j-1},\lambda_{j},\lambda_{j+1})$.
(Note that $\lambda_{j,n}\leq \lambda^*_j$ for all $n$, cf.~\eqref{limits-pu}.)

The general summand in the formula (\ref{Cn-traj}$'$) is obviously $O(\lambda_{j\pm})$ and in the formula \eqref{Cn-quad} (second line), it is $O(\lambda_{j\pm}^2)$.
(In both cases the dependence on $\lambda_{j+1}$ is vacuous.)


\begin{lemma}
\label{lem:Cn-cube}
There exist functions $f$ and $r$ of three arguments such that
$f(\vec{\lambda_j})=O(\lambda_{j\pm}^3)$, $r(\vec{\lambda_j})=O(\lambda_{j\pm}^2)$,
and 
$$
 C_n=\sum_{j=1}^k f(\vec{\lambda_j})+1+r(\vec{\lambda_k}),
$$
where $k=\lfloor n/2\rfloor$ and the
values $\lambda_j=\lambda_{j,n}$ are taken 
on the trajectory $\Tra{n}$. 
\end{lemma}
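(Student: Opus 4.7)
The plan is to refine further the quadratic representation of Lemma~\ref{lem:Cn-quad} by invoking the recurrence \eqref{rr-lambda} to upgrade each degree-2 summand to a degree-3 one, while allowing the inevitable boundary terms either to sit inside $\vec{\lambda_k}$ (and thus be absorbed into $r$) or to be traded for a second, cubic telescope that handles the non-small left boundary.

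Starting from $C_n = 2\sum_{j=1}^k \lambda_j(\lambda_{j-1}-2\lambda_j)/u_j + \delta$ with $\delta = 1 - (p_k-1)^2$, I first expand $1/u_j = 1 + \lambda_j/u_j$; the piece $2\lambda_j^2(\lambda_{j-1}-2\lambda_j)/u_j$ is plainly $O(\lambda_{j\pm}^3)$ and is parked into $f$. Next I symmetrize the remaining degree-2 sum via $2(\lambda_{j-1}-2\lambda_j) = (\lambda_{j-1}-\lambda_{j+1}) + E_j$ with $E_j := \lambda_{j-1}-4\lambda_j+\lambda_{j+1}$. Solving \eqref{rr-lambda} for $E_j$ on the trajectory gives
$$E_j = -2\lambda_j^2 - 2\lambda_j\lambda_{j+1} + \lambda_{j-1}\lambda_{j+1} + \lambda_j^2\lambda_{j+1},$$
a polynomial in $\vec{\lambda_j}$ of degree $\geq 2$; multiplied by $\lambda_j$ it is $O(\lambda_{j\pm}^3)$. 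The remaining telescopic half sums to $\sum_{j=1}^k \lambda_j(\lambda_{j-1}-\lambda_{j+1}) = \lambda_0\lambda_1 - \lambda_k\lambda_{k+1}$.

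The new difficulty compared with Lemma~\ref{lem:Cn-quad} is that the left boundary $\lambda_0\lambda_1 = \lambda_1$ (recall $\lambda_0=1$) is of order~$1$ and cannot enter $r(\vec{\lambda_k})$. To dispose of it I invoke the auxiliary degree-3 telescope
$$\sum_{j=1}^k \left(\lambda_{j-1}^2\lambda_j - \lambda_j^2\lambda_{j+1}\right) = \lambda_0^2\lambda_1 - \lambda_k^2\lambda_{k+1} = \lambda_1 - \lambda_k^2\lambda_{k+1},$$
whose summand is a homogeneous cubic in $\vec{\lambda_j}$, hence automatically $O(\lambda_{j\pm}^3)$. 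Substituting this identity for $\lambda_1$ converts the offending boundary into a cubic summand (merged into $f$) and a new $\vec{\lambda_k}$-local term $\lambda_k^2\lambda_{k+1}$. Gathering all cubic contributions into $f$, all $\vec{\lambda_k}$-local terms (using $\delta - 1 = -(p_k-1)^2 = -\lambda_{k+1}^2/(1-\lambda_{k+1})^2$, an identity valid uniformly in the parity of $n$) into $r$, and noting that what remains from $\delta$ is exactly $1$, completes the identity.

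The main obstacle is precisely this boundary $\lambda_1$: it is the only genuinely non-small quantity that telescoping produces, and the only place it can be hidden is inside $f(\vec{\lambda_1})$. The resolution exploits $\lambda_0 = 1$: the degree-3 monomial $x_1^2 x_2$, while $O(\max^3)$ on the whole cuboid, evaluates to exactly $\lambda_1$ at $\vec{\lambda_1}$, so a cubic-bounded function does supply the required boundary contribution.
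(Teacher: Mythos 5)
Your proof is correct and follows the paper's strategy in essence: expand $1/u_j$ to peel off a cubic piece, use the recurrence \eqref{rr-lambda} to turn the remaining quadratics into cubics via telescoping, and absorb the right-boundary corrections into $r(\vec{\lambda_k})$. Where you improve on the paper is the left boundary. The paper's chain of equalities, after producing $\lambda_0\lambda_1+\sum_{j=1}^k(\lambda_{j-1}+\lambda_{j+1})\lambda_j-\lambda_k\lambda_{k+1}$, silently drops $\lambda_0\lambda_1=\lambda_1$ by treating it as part of the $j=1$ term $O(\lambda_{1\pm}^3)$ --- numerically harmless since $\max|\vec{\lambda_1}|=1$, but not compatible with the paper's own definition of $O(\lambda_{j\pm}^d)$, which requires a single rational function $f$ obeying $|f(x_1,x_2,x_3)|\le C\max_i|x_i|^d$ on the whole cuboid (the monomial $x_2$ fails this with $d=3$ along $x_1=x_2\to 0$, $x_3=0$). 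Your auxiliary degree-3 telescope $\sum_{j=1}^k(\lambda_{j-1}^2\lambda_j-\lambda_j^2\lambda_{j+1})=\lambda_1-\lambda_k^2\lambda_{k+1}$ is exactly the right fix: it exploits $\lambda_0=1$ to reproduce $\lambda_1$ at $j=1$ while its summand $x_1^2x_2-x_2^2x_3$ is a genuine homogeneous cubic satisfying the cuboid bound. The resulting $f$ and $r$ differ from the paper's (yours retains a $\lambda_{j-1}$-dependence; the paper's further-simplified form \eqref{C-cubic} does not), but the lemma only asserts existence, so this is immaterial. In short: same route, but your handling of the $\lambda_0\lambda_1$ boundary term closes a small gap the paper leaves implicit.
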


\begin{proof} 
The recurrence relation \eqref{rr-lambda} implies that
$$
  \lambda_{j-1}+ \lambda_{j+1}=4\lambda_j+O(\lambda_{j\pm}^2).
$$
From this we deduce
$$
\ba{l}
\dst
 2\sum_{j=1}^{k} \lambda_{j-1}\lambda_j=\sum_{j=1}^{k} \lambda_{j-1}\lambda_j+\sum_{j=0}^{k-1} \lambda_j\lambda_{j+1} 
=\lambda_0\lambda_1+\sum_{j=1}^{k} (\lambda_{j-1}+\lambda_{j+1})\lambda_j-\lambda_k\lambda_{k+1}
\\[3ex]
\dst \qquad\qquad\qquad
=\sum_{j=1}^{k} (4\lambda_{j}^2+O(\lambda_{j\pm}^3))- O(\lambda_{k\pm}^2).
\ea
$$
Using the fact that $1/u_j=1+O(\lambda_{j\pm})$ we may write the formula \eqref{Cn-quad} in the form
$$
 C_n=2\sum_{j=1}^k \left(\lambda_{j-1} \lambda_{j}-2\lambda_{j}^2+O(\lambda_{j\pm}^3)\right)+1+O(\lambda_{k\pm}^2).
$$
Substituting the previous identity to the latter one we come to the claimed formula. 
\end{proof}

\noindent
{\bf Remark. } A rather tedious calculation that was replaced by the $O$-estimates in the proof of Lemma~\ref{lem:Cn-cube} 
leads to the following formula for $C$:
\begin{equation}
\label{C-cubic}
C=p^*_0+\sum_{j=1}^\infty \frac{{\lambda^*_j}^2(\lambda^*_{j+1}-2\lambda^*_j+\lambda^*_j\lambda^*_{j+1})}{u^*_j u^*_{j+1}},
\end{equation}
where the general term of the series decays as $O(\rho^{-3j})$ and, due to misterious cancelations, does not depend on $\lambda_{j-1}^*$. 

\subsection{Main theorem on convergence rates}
\label{ssec:convrate}

\begin{theorem}
\label{thm:convrates}

The following asymptotic equivalences hold. (The implied constants are independent of $n$ and $j$.)

(a) The distance from the fixed point: for $0\leq j\leq n/2$,
\begin{equation}
\label{conv-p1}
|p_{j,n}-1|+|u_{j,n}-1| \asymp \rho^{-j}
\end{equation}
and for $k=\lfloor n/2\rfloor$,
\begin{equation}
\label{lowerbnd_difUk1}
 |u_{k,n}-1|\asymp \rho^{-n/2}.
\end{equation}
(b)The distance to the limit trajectory: also for  $0\leq j\leq n/2$,
\begin{equation}
\label{conv-pp}
|p_{j,n}-p_j^*|+|u_{j,n}-u_j^*|\asymp\rho^{j-n}.
\end{equation}
In particular,
$$
 |p_{0,n}-p_0^*|\asymp \rho^{-n}.
$$
(c) Convergence rate of the sequence $(C_n)$: 
\begin{equation}
\label{conv-C}
C-C_n\asymp\rho^{-n}.
\end{equation}

\end{theorem}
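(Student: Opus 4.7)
My approach combines the Hartman $C^1$ linearization of Section~\ref{ssec:linearization} with the boundary value formulation of Lemma~\ref{lem:traj-lin}. In the $(\xi,\eta)$-coordinates the discrete dynamics is given by the diagonal map $\Psi=\mathrm{diag}(\rho^{-1},\rho)$, so for $n_0\le j\le k=\lfloor n/2\rfloor$ the trajectory coordinates are $\xi_{j,n}=\rho^{n_0-j}\xi_{n_0,n}$, $\eta_{j,n}=\rho^{j-n_0}\eta_{n_0,n}$, and the two-endpoint constraint reads $(\xi_{n_0,n},\eta_{n_0,n})\in\Gamma_1$ together with $\Psi^{k-n_0}(\xi_{n_0,n},\eta_{n_0,n})\in\Gamma_2'$ (odd $n$) or $\in\Gamma_2''$ (even $n$). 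By Lemma~\ref{lem:transversal}, near $P_s^\#=(\xi_{n_0}^*,0)$ the curve $\Gamma_1$ is a graph $\xi=\xi_{n_0}^*+a\eta+O(\eta^2)$. At the other endpoint I need $\Gamma_2',\Gamma_2''$ to meet $\gamma_u^\#$ (the $\eta$-axis) transversally at $0$; this follows directly by computing the eigenvectors $(1,-2-\sqrt{3})$ and $(1,\sqrt{3}-2)$ of $D\Phi(P_0)$ and noting that neither coincides with the tangent direction of $p=1$ or $p=u$ at $P_0$. Thus near the origin $\Gamma_2'$ and $\Gamma_2''$ admit graph forms $\eta=b\xi+O(\xi^2)$ with $b\ne 0$.

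Substituting these local forms and solving gives $\eta_{n_0,n}=b\xi_{n_0}^*\rho^{2(n_0-k)}+O(\rho^{3(n_0-k)})\asymp\rho^{-n}$ and $\xi_{n_0,n}-\xi_{n_0}^*=a\eta_{n_0,n}+O(\eta_{n_0,n}^2)=O(\rho^{-n})$. Propagating by $\Psi^{j-n_0}$, for $n_0\le j\le k$ I obtain
\[ |\xi_{j,n}|\asymp\rho^{-j},\qquad |\eta_{j,n}|\asymp\rho^{j-n},\qquad |\xi_{j,n}-\xi_j^*|=O(\rho^{2n_0-j-n}). \]
Since $h$ is a $C^1$ diffeomorphism on $\Omega$ it is bi-Lipschitz, so these estimates transfer to the $(p,u)$-plane with only the constants changing; for the bounded index range $0\le j<n_0$ the same orders survive a fixed number of applications of the smooth map $\Phi^{-1}$. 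Statement (a) is then immediate: $|\xi_{j,n}|$ dominates $|\xi_{j,n}|+|\eta_{j,n}|$ for $j\le n/2$, giving the distance from the fixed point as $\asymp\rho^{-j}$; at the midpoint $j=k$ both terms are of size $\rho^{-n/2}$, and combined with $p_k=1$ (odd $n$) or $p_k=u_k$ (even $n$) from Lemma~\ref{lem:id}(c), this extracts $|u_{k,n}-1|\asymp\rho^{-n/2}$. For (b), since the limit trajectory has $\eta_j^*=0$, the distance $|\xi_{j,n}-\xi_j^*|+|\eta_{j,n}|$ is dominated and bounded below (for $n_0\le j\le k$) by $|\eta_{j,n}|\asymp\rho^{j-n}$; for $j<n_0$ all orders are $\asymp\rho^{-n}$, which coincides with $\rho^{j-n}$ up to constants.

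For (c), I use Lemma~\ref{lem:Cn-cube}, whose summand $f(\vec{\lambda_j})$ is rational in $\lambda_{j-1},\lambda_j,\lambda_{j+1}$ and is $O(\lambda_{j\pm}^3)$, with remainder $r(\vec{\lambda_k})=O(\lambda_{k\pm}^2)$. Passing to the limit gives $C=\sum_{j=1}^\infty f(\vec{\lambda_j^*})+1$, so
\[ C-C_n=\sum_{j=1}^k[f(\vec{\lambda_j^*})-f(\vec{\lambda_j})]+\sum_{j=k+1}^\infty f(\vec{\lambda_j^*})-r(\vec{\lambda_k}). \]
The tail is $O(\rho^{-3k})=o(\rho^{-n})$, and $r(\vec{\lambda_k})=O(\rho^{-n})$. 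Since the partials of $f$ are $O(\lambda_{j\pm}^2)=O(\rho^{-2j})$ and $|\lambda_{j,n}-\lambda_j^*|=O(\rho^{j-n})$ by (b), each difference $f(\vec{\lambda_j^*})-f(\vec{\lambda_j})=O(\rho^{-j-n})$, whose sum is $O(\rho^{-n})$. This gives the upper bound $C-C_n=O(\rho^{-n})$; the matching lower bound follows from the monotonicity $C_n\uparrow C$ (Proposition~\ref{prop:Cn_incr}) and \eqref{lowerbnd_difC}, namely $C-C_n\ge C_{n+1}-C_n\ge c(1-u_{k,n})^2\asymp\rho^{-n}$ by (a). The principal obstacle throughout is securing the two-sided estimate $\eta_{n_0,n}\asymp\rho^{-n}$ rather than merely $O(\rho^{-n})$; this requires the nonvanishing of both $a$ (from Lemma~\ref{lem:transversal}) and $b$ (from the eigenvector check), and without the lower bound in (a) neither (b) nor the lower bound in (c) would be accessible.
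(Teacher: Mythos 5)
Your argument follows the paper's proof in all essentials: Hartman's $C^1$ linearization plus transversality at both endpoints (Lemma~\ref{lem:transversal} for $\Gamma_1$ against $\gamma_s^\#$, and the eigenvector check showing $\Gamma_2',\Gamma_2''$ are transversal to the coordinate axes) give the two-sided estimates of (a) and (b), and (c) is derived exactly as in the paper from Lemma~\ref{lem:Cn-cube} for the upper bound and from \eqref{lowerbnd_difC} together with \eqref{lowerbnd_difUk1} for the lower bound. The one cosmetic difference is that you write the boundary curves as graphs with $O(\eta^2)$ and $O(\xi^2)$ remainders, which the merely $C^1$ conjugacy does not license; replacing these by $o(\eta)$ and $o(\xi)$ (or by the paper's bounded-ratio formulation of transversality) costs nothing, since only the orders $\asymp\rho^{-n}$ are needed.
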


\begin{proof}
Note first that it is enough to prove \eqref{conv-p1} and \eqref{conv-pp} for $j\geq n_0$ and assuming that $n>2n_0$, where $n_0$ is 
determined by the size of the neighborhood $\Omega$ provided by Hartman's theorem (Section~\ref{ssec:bvp-linear}). 

Then, since the linearizing diffeomorphism $h$ is $C^1$, it is enough to prove the corresponding estimates in the coordinates
$(\xi,\eta)$ where the dynamics is linear. (Cf.\ Remark in Section~\ref{ssec:linearization}.)

Throughout the proof we denote $k=\lfloor n/2\rfloor$.

\smallskip
(a) Put $d_{j,n}=|\xi_{j,n}|+|\eta_{j,n}|$. 
By Lemma~\ref{lem:traj-lin} we have 
$$
 d_{j,n}=\rho^{k-j}|\xi_{k,n}|+\rho^{j-k}|\eta_{k,n}|. 
$$
The curves $\Gamma_2'$ and $\Gamma_2''$ defined in 
Section~\ref{ssec:bvp-linear} are transversal to the axes of the $(\xi,\eta)$ coordinate plane, since their images under $h^{-1}$ are transversal
to the stable and unstable directions of the map $\Phi$ at the fixed point $P_0$.
Therefore 
$$
 m_1\rho^{k-j} d_{k,n}< d_{j,n} <M_1\rho^{k-j} d_{k,n}
$$
for appropriate constants $0<m_1<M_1$ (independent of $n$) and $n_0\leq j\leq k$.
On the other hand, $m_1'<d_{n_0,n}<M_1'$ for some constants $0<m_1'<M_1'$.
It follows that
$$
 m_1''\rho^{-j} < d_{j,n} <M_1''\rho^{-j} 
$$  
with
$$
 m_1''=\frac{m_1 m_1'}{M_1}\rho^{n_0},
\qquad
 M_1''=\frac{M_1 M_1'}{m_1}\rho^{n_0}.
$$
The estimate \eqref{conv-p1} is proved.

The estimate  \eqref{lowerbnd_difUk1} is due to the fact that $|p_k-1|\leq |u_k-1|$ for all $n$. (For even $n$, we have equality and for odd $n$
the left-hand side is zero.)

\smallskip
(b) Put $d^*_{j,n}=|\xi_{j,n}-\xi^*_{j}|+|\eta_{j,n}|$. (Recall --- see Section~\ref{ssec:bvp-linear} --- that $(\xi^*_j,0)=h(p^*_j,u^*j)\in\gamma_s^\#$.)  
By linearity of the map $\Psi$ we have
$$
 d^*_{j,n}=\rho^{n_0-j}|\xi_{n_0,n}-\xi^*_{n_0}|+\rho^{j-n_0}|\eta_{n_0,n}|. 
$$
Since $(\xi_{n_0,n},\eta_{n_0,n})\in \Gamma_1$ (Lemma~\ref{lem:traj-lin}), the transversality of $\Gamma_1$ and $\gamma_s^\#$
(Lemma~\ref{lem:transversal}) implies that the ratio $|\xi_{n_0,n}-\xi^*_{n_0}|/|\eta_{n_0,n}|$ is bounded between two positive constants independent of $n$.
Therefore
$$
 m_2\rho^{j-n_0} d^*_{n_0,n}< d^*_{j,n} <M_2\rho^{j-n_0} d^*_{n_0,n}
$$
for appropriate constants $0<m_2<M_2$ (independent of $n$) and $n_0\leq j\leq k$.

As in proof of (a), we invoke the fact that $\Gamma_2'$ and $\Gamma_2''$ are transversal to $\gamma_s^\#$.
Using the estimates of part (a) and the estimate
$|\xi_k^*|=O(\rho^{-k})$, which easily follows from (a), we conclude that 
$$
  m_2' \rho^{-k}\leq d^*_{k,n}\leq M_2' \rho^{-k}
$$
for appropriate constants $0<m_2<M_2'$.
Therefore 
$$
m_2''\rho^{j-2k} < d^*_{j,n} <M_2''\rho^{j-2k} 
$$
with
$$
 m_2''=\frac{m_2 m_2'}{M_2},
\qquad
 M_2''=\frac{M_2 M_2'}{m_2}.
$$
Since $n-1\leq 2k\leq n$, the estimates as required are obtained.

Note that in the proof of part (a) the factors $\rho^{n_0}$ were justly included in the constants ($m_1''$ and $M_1''$) because
$n_0$ is a constant. Here in the proof of part (b) we cannot include the factors $\rho^{-2k}$ in the constants ($m_2''$ and $M_2''$),
since $k$ depends on $n$.

\smallskip
(c) The lower bound follows from Eqs.\ \eqref{lowerbnd_difC} and \eqref{lowerbnd_difUk1}. It remains to prove that $C-C_n=O(\rho^{-n})$.
Let $f$ be the function from Lemma~\ref{lem:Cn-cube}.
Introduce (compare \eqref{SN})
$$
 \tilde S_N=\sum_{j=1}^N
 f(\vec{\lambda}_j^*).
$$
By part (a), $\lambda_j^*=O(\rho^{-j})$, hence there exists $S=\lim_{N\to\infty} \tilde S_N$.
(We use the same notation for the limit as in Theorem~\ref{thm:Sh-lim}; the equality of the two will immediately follow from the proof.)

By Lemma~\ref{lem:Cn-cube} we have 
$$
 C-C_n=2(S-\tilde S_k)+2\sum_{j=1}^k  \left(f(\vec{\lambda}^*_j)-f(\vec{\lambda}_{j,n})\right)+
 \left(r(\vec{\lambda}^*_k)-r(\vec{\lambda}_{k,n})\right).
$$
Let us estimate the right-hand side. 

 1) Since $f(\vec{\lambda_j^*})=O(\lambda_{j\pm}^3)$, we have $S-\tilde S_k=O(\rho^{-3k})=O(\rho^{-3n/2})$.

 2) Since $r$ is a rational function and $r(\vec\lambda_{j})=O(\lambda_{j\pm}^2)$, 
  we have $r(\vec\lambda^*_k)-r(\vec\lambda_{k})=O(\lambda_{k\pm})\|\vec\lambda^*_k-\vec\lambda_{k}\|$.
  By part (a), $O(\lambda_{k\pm})=O(\rho^{-n/2})$, and by part (b) also $\|\vec\lambda^*_k-\vec\lambda_{k}\|=O(\rho^{-n/2})$.
  Thus $r(\vec\lambda^*_k)-r(\vec\lambda_{k})=O(\rho^{-n})$.
  
 3) Similarly we have
 $$
 f(\vec\lambda^*_j)-f(\vec\lambda_{j})=O(\lambda_{j\pm}^2)\|\vec\lambda^*_j-\vec\lambda_{j}\|=O(\rho^{-2j})\,O(\rho^{j-n})=O(\rho^{-j-n}). 
 $$
 Summing from $j=1$ to $k$ gives the estimate $O(\rho^{-n})$ for the sum.%
 \footnote{It is here that using Lemma~\ref{lem:Cn-quad} instead of Lemma~\ref{lem:Cn-cube} would lead to the weaker estimate $C-C_n=O(n\rho^{-n})$ with a superfluous linear factor.} 
This concludes the proof.
\end{proof}

\section{Remarks on the dynamics of the map $\Phi$}
\label{sec:remarks}

The dynamical system determined by the map \ref{mapPhi} can be an object of interest in its own right.  
Let us draw the attention to some points that may prompt generalizations or further exploration. 

\begin{enumerate}
\item 
From the point of view of symplectic dynamics with discrete time, the function $L(\cdot,\cdot)$ in Eq.~\eqref{gn} is interpreted as Lagrange's function 
and the function $g(\uu)$ --- as the action, cf.\ e.g.\ \cite{MDS_1998}, end of Section~9.1.
The trajectories satisfying specific boundary conditions arise from the principle of least action. 

\item 
For the purposes of this paper, only the positive quadrant (indeed, only the curvilinear quadrangle $0P_s P_0 P_u$, see Fig.~\ref{fig:invcurves}) is relevant
as regards the domain of the map $\Phi$. Going beyond the positive quadrant, there is another (elliptic) fixed point, $(-1,-1)$, and a continuous family of 4-cycles of the form $\{(-1,t), (t,-1),(-1,1/t),(1/t,-1)\}$ but no other obvious ``regular'' trajectories.
The extension of the invariant curves $\gamma_s$ and $\gamma_u$ beyond
the positive quadrant leads to the global invariant curves consisting of countably many smooth segments punctured at the points with $p=0$.
It would be interesting, in particular, to find other families of periodic trajectories and to characterize the set of non-wandering points of the system. 

\item
In connection with the two items above, one may ask about a meaningful way to introduce an additional parameter into the problem 
so as to obtain an interesting family, in which parameter-dependent phenomena (such as genericity of behaviour and bifurcations) can be studied. 

\item
Strictly speaking, since $\Phi(p,u)$ is not defined when $p=0$,
the global dynamics of the map $\Phi$ is defined on the set $\RR^2\setminus\cup_{j=0}^\infty (\Phi^{j}(X_1)\cup\Phi^{-j}(X_2))$,
where $X_1=\RR\times\{0\}$ and $X_2=\{0\}\times \RR$ are the coordinate axes. 
Can the action be extended in a natural way to a connected (and compact?) phase space by adjoining limit points corresponding to $\Phi(0,\cdot)$
and $\Phi^{-1}(\cdot,0)$?
\end{enumerate}

\section*{Appendix: Numerical constants}

For reference, we give below the numerical values of the constant $p_0^*$, see Sec.~\ref{sec:trajan}{\bf D}, with 400 decimals 
and of Shallit's constant $C$ with 401 decimals (so that round-down preserves the last decimal).   
The analysis of errors based on the results of Section~\ref{sec:convrates} leads to the following computational recommendations:

\begin{enumerate}
\item
  The accuracy of computations should be about the same as the desired accuracy of the results.
In particular, the presented results were obtained using CAS Maple (ver.14) with accuracy parameter \verb! Digits:=406!.   
Further increase of the accuracy had no effect on the presented decimals. 

\item
  In order to compute $p_0^*$ to the accuracy $\eps$, one can approximate it by $p_{0,n}$ with $n\approx -\log(\rho)/\log(\eps)$.
  In our case, $400\log(10)/\log(\rho)\approx 700$, and $n=702$ was sufficient to obtain all the presented decimals.  

\item
 In order to compute $C$ to the accuracy $\eps$, one needs to know $p_0\approx p_0^*$ to the accuracy of about $\eps$
 and to compute the partial sum of the series \eqref{C-cubic} with about $N\approx (2/3)\log\eps/\log\rho$ terms.
 (The terms are computed by iterating the map $\Phi$ starting with $(p_0,0)$.) In our case, $470=(2/3)\cdot 710$ terms sufficed. 
\end{enumerate}




To compute $p_{0,n}$ we used the shooting method with varying initial value $p_0$ and the terminal condition \eqref{bcondk}.   
The method of bisections was used to evaluate $p_0$ to the desired accuracy.

The results:

{\small
$$
\ba{rl}
 C=&
1.36945140399377005843552792420621433660771875900631 \\ &
\;\;87665783890080147149175646469894434570927342684376 \\ &
\;\;34144005789481431365880002586689693733190308998890  \\ &
\;\;87661338724042220421629124855821828996392195797332  \\ &
\;\;37120786480772194060018711007212918114161859594878  \\ &
\;\;04747713203416025094719841701277551146944176869331 \\ &
\;\;22641568691652661120042454933291650324779877238620 \\ &
\;\;756313168644067581730655070193831898528418301296696
\ea
$$

$$
\ba{rl}
p_0^*=&
1.44705435001627940656436532022322150134511477660996 \\ &
\;\;33541911604260928884594955381538985337173235890178 \\ &
\;\;44526143413324403274382574686028805322113073504874 \\ &
\;\;00334595332938142346550419137468567444603348994551 \\ &
\;\;35796272850688980015659307375350206718027627632733 \\ &
\;\;42268003719961619375942126945431930724800205584648 \\ &
\;\;72216579711992054958880069053860364912122611655716 \\ &
\;\;63216645295020299203349516473157637104275782708157 
\ea
$$
%

}


\begin{thebibliography}{111111}


\bibitem[Finch 2003]{Finch2003}
S.~Finch, Mathematical constants, Princeton Univ. Press, 2003.

\bibitem[Grosjean, De Meyer 1995]{GDM_1995}
C.C. Grosjean, H.E. De Meyer, 
Solution of Problem 94-15: A minimization problem, 
SIAM Rev. 37:3 (1995), 451--458,
doi 10.1137/1037097.

\bibitem[Hartman 1960]{Hartman_1960}
P.~Hartman, 
On local homeomorphisms of Euclidean spaces, Bol. Soc. Mat . Mexicana (2) 
5 (1960), 220--241.

\bibitem[Katok, Hasselblatt 1996]{KH_1996}
A.~Katok, B.~Hasselblatt,
Introduction to the modern theory of dynamical systems,
Cambridge Univ. Press, Cambridge, 1996.


\bibitem[McDuff, Salamon 1998]{MDS_1998}
D.~McDuff, D.~Salamon,
Symplectic topology (2nd ed.),
Oxford Univ. Press, Oxford, 1998.

\bibitem[Sadov 2016]{Sadov_2016}
S.~Sadov,
Lower bound for cyclic sums of Diananda type,
Arch.~Math. 106 (2016), 135--144,
doi 10.1007/s00013-015-0853-3.

\bibitem[Shallit 1994]{Shallit_1994}
J. Shallit,
A Minimization Problem,
SIAM Rev. 36:3 (1994), 490--491,
doi 10.1137/1036109.




\end{thebibliography}
\end{document}